\newcommand{\BibTeX}{{\scshape Bib}\kern-.08em\TeX}
\newcommand{\T}{\S\kern .15em\relax }
\newcommand{\AMS}{$\mathcal{A}$\kern-.1667em\lower.5ex\hbox
        {$\mathcal{M}$}\kern-.125em$\mathcal{S}$}
\DeclareMathOperator{\des}{des}
\DeclareMathOperator{\Hom}{Hom}
\DeclareMathOperator{\rang}{rk}
\DeclareMathOperator{\Spec}{Spec}
\def\sbullet{{\scriptscriptstyle\bullet}}
\title{Distribution of logarithmic spectra of the equilibrium energy}
\date{\today}
\author{Huayi Chen}
\address{Institut Fourier, Universit\'e Joseph Fourier -- Grenoble I}
\email{}
\urladdr{}
\author{Catriona MacLean}
\address{Institut Fourier, Universit\'e Joseph Fourier -- Grenoble I}
\email{}
\urladdr{}
\begin{document}
\def\smfbyname{}
\begin{abstract}
Let $L$ be a big invertible sheaf on a complex projective variety, equipped with two continuous metrics. We prove 
that the distribution of the eigenvalues of the transition matrix between the $L^2$ norms on $H^0(X,nL)$ with respect 
to the two metriques converges (in law) as $n$ goes to infinity to a Borel probability measure on $\mathbb R$. This 
result can be thought of as a generalization of the existence of the energy at the equilibrium as a limit,
or an extension of Berndtsson's results to the more general context of graded linear series and a more general class 
of line bundles. Our approach also 
enables us to obtain a $p$-adic analogue of our main result.
\end{abstract}
\maketitle

\tableofcontents

\section{Introduction}

Let $X$ be a complex projective variety (or alternatively, a projective variety defined over a complete non-archimedean field),
and let $L$ be an invertible sheaf on $X$. We assume that $L$ is \emph{big}, or in other words, that the 
\emph{volume} of $L$, defined by
\[\mathrm{vol}(L):=\lim_{n\rightarrow+\infty}\frac{\rang_{\mathbb C} H^0(X,nL)}{n^d/d!},\]
is strictly positive. (In the above formula, $d$ is the dimension of $X$.) 
In \cite{Ber_Bou08}, Berman and Boucksom studied
the Monge-Amp\`ere energy functional of two continuous metrics on $L$, 
$\varphi$ and $\psi$. The equilibrium Monge-Amp\`ere energy of the pair $(\varphi, \psi)$ is an invariant defined by
\[\mathcal E_{\mathrm{eq}}(\varphi,\psi):=\frac{1}{d+1}\sum_{j=0}^d\int_{X(\mathbb C)}
(P\varphi-P\psi)\,c_1(L,P\psi)^jc_1(L,P\varphi)^{d-j}.\]
In the above formula  $P\phi$ denotes, for any continuous metric $\phi$ on $L$, the supremum of all (possibly singular)
plurisubharmonic continuous metrics bounded above by $\phi$. This invariant describes the asymptotic
behaviour of the ratio of the volumes of the unit balls in the linear systems $H^0(X,nL)$ ($n\geqslant 1$) with respect
to the $L^2$ norms induced by the metrics $\varphi$ and $\psi$, respectively. More precisely, given any probability 
measure $\mu$ on $X(\mathbb C)$, equivalent to Lesbegue measure in every local chart, we have that 
(cf. \cite[Th\'eor\`eme A]{Ber_Bou08})
\begin{equation}\label{Equ:convergenceE}\lim_{n\rightarrow+\infty}\frac{d!}{n^{d+1}}\ln\frac{\mathrm{vol}(\mathcal B^2(nL,\varphi,\mu))}{\mathrm{vol}(\mathcal B^2(nL,\psi,\mu))}=\mathcal E_{\mathrm{eq}}(\varphi,\psi),\end{equation}
where for any continuous metric $\phi$ on $L$, the expression $\mathcal B^2(nL,\phi,\mu)$ denotes the unit ball
in  $H^0(X,nL)$ with respect to the following  $L^2$ norm :
\[\forall\, s\in H^0(X,nL),\quad\|s\|^2_{L^2_{\phi,\mu}}:=\int_{X(\mathbb C)}\|s\|^2_{n\phi}(x)\,\mu(\mathrm{d}x) .\]
All volumes are calculated with respect to some arbitrary Haar measure on $H^0(X,nL)$. 

The logarithm which appears in the left hand side of equation  (\ref{Equ:convergenceE}) is equal (up to
multiplication by a constant) to the mean of the logarithms of the eigenvalues of the transition matrix between
$\varphi_n$ and $\psi_n$. Here $\varphi_n$ and $\psi_n$ are the $L^2$ norms induced by $\varphi$ and $\psi$ respectively
on the space $H^0(X, nL)$.

In \cite{Ber09}, Berndtson studies the distribution of eigenvalues of this transition matrix in the case where
$\varphi$ and $\psi$ are K\"ahler metrics on an ample line bundle $L$. He establishes the folowing fact by
a detailled analysis of geodesics in the space of K\"ahler metrics on $L$ : if $d_n={\rm dim} V_n$ and the numbers
$\lambda_j$ are the logarithms of the eigenvalues of the transition matrics between $\varphi_n$ and $\psi_n$ then the
probability measure \[\nu_n= d_n^{-1}\sum_j \delta_{\lambda_j}\] converges as $n$ tends to infinity to a measure on $\mathbb{R}$ 
which is defined using the Monge-Amp\`ere geodesic linking $\varphi$ and $\psi$ in $\mathcal{H}_L$, the space of
Hermitian metrics in $L$.

The log-ratio of the volumes appearing in \eqref{Equ:convergenceE} is an analogue of the degree function in Arakelov 
geometry. Let $E$ be a vector space of finite rank over a number field $K$ equipped with a family of norms $\|.\|_v$, where $v$ runs 
over the set $M_K$ of places of $K$, and $\|.\|_v$ is a norm on $E\otimes_{K}K_v$ which is ultrametric when $v$ is a 
finite place and Euclidean or Hermitian when $v$ is real or complex. Moreover, we suppose that $E$ contains 
a lattice $\mathcal E$ (ie. a maximal rank sub-$\mathcal O_K$ module, where $\mathcal O_K$ is the ring of algebraic
integers in $K$) such that for all but a finite number of places $v$ the norm $\|.\|_v$ comes from the 
$\mathcal O_K$-module structure on $\mathcal E$.

The data $\overline E=(E,(\|.\|_v))$ is called a \emph{Hermitian adelic bundle} on $K$ and its \emph{Arakelov degree} 
is defined to be the weighted sum
\begin{equation}\label{Equ:degre Arakelov}\widehat{\deg}(\overline E):=-\sum_{v\in M_K}n_v\ln\|s_1\wedge\cdots\wedge s_r\|_v, \end{equation}
where $(s_1,\ldots,s_r)$ is a basis of $E$ over $K$ and the weight $n_v$ is the rank of $K_v$ as a vector space over
$\mathbb Q_v$. The Arakelov degree is well-defined due to the product formula 
\[\forall\,a\in K^{\times},\quad\sum_{v\in M_K}n_v\ln|a|_v=0.\] 

We refer the reader to \cite[Appendice A]{BostBour96} and \cite{Gaudron08} for a detailled exposition of this theory. 
In the context of complex geometry, we consider a finite dimensional vector space $V$ equipped with two Hermitian norms
$\varphi$ and $\psi$. We can write the log-ratio of the volumes of the unit balls (with respect to $\varphi$ and
$\psi$ respectively) as
\begin{equation}\label{Equ:degreArake}\ln\frac{\mathrm{vol}(\mathcal B(V,\varphi))}{\mathrm{vol}(\mathcal B(V,\psi))}=\widehat{\deg}(V,\varphi,\psi):=-\ln\|s_1\wedge\cdots\wedge s_r\|_{\varphi}+\ln\|s_1\wedge\cdots\wedge s_r\|_{\psi},\end{equation}
where $(s_1,\ldots,s_r)$ is a basis for $V$. This expression is independent of the choice of  basis by the elementary 
product formula
\[\forall\,a\in\mathbb C^{\times},\quad \ln|a|-\ln|a|=0.\] 
With this notation, equation \eqref{Equ:convergenceE} can be rewritten iin the form
\[\lim_{n\rightarrow+\infty}\frac{\widehat{\deg}(H^0(X,nL),L^{2}_{\varphi,\mu},L^2_{\psi,\mu})}{n^{d+1}/d!}=\mathcal E_{\mathrm{eq}}(\varphi,
\psi).\]
which is an analogue of  the arithmetic Hilbert-Samuel theorem for Hermitian invertible bundles on a projective 
arithmetic variety.

Given the results in \cite{Ber09} and the convergences proved in \cite{Chen10b}, it is natural to wonder whether the 
eigenvalue distribution of the metric $L_{\varphi,\mu}^2$ with respect to $L_{\psi,\mu}^2$ is well-behaved asymptotically
in more general situations. Unlike the arithmetic case, it does not seem possible to reformulate these spectra 
functorially, which is a key step in the proof of the convergence results in \cite{Chen10b}. This phenomenon arises
essentially because of the presence of a negative weight in formula \eqref{Equ:degreArake}, absent in the
arithmetic case. The interested reader will find counter-examples illustrating this phenomenon in \cite{Chen_hn}, 
remark 5.10 and \S4, example 2.

The main contribution of this article is the introduction of a truncation method for studying the asymptotic behaviour 
of eigenvalues of transition matrices between two metrics. We consider truncations of $\psi$ by dilatations of
$\varphi$, which enables us to prove the following result (cf. theorem \ref{Thm:thmprincipal} \emph{infra})~:

\begin{enonce*}{Main theorem}
Let $k$ be the field of complex numbers or a complete non-archimedean field, and let $X$ be a projective $k$-variety
of dimension $d\geqslant 1$. Let $L$ be a big invertible $\mathcal O_X$-module with two continuous metrics $\varphi$ 
and $\psi$. For any integer $n\geqslant 1$, let $\varphi_n$ and $\psi_n$ be the sup norms on $H^0(X,nL)$ with respect 
to metrics  $\varphi$ and $\psi$ respectively. Moreover, let $\varphi_n'$ and $\psi_n'$ be hermitian norms on 
$H^0(X,nL)$ such that\footnote{If $\eta$ and $\eta'$ are two norms on a finite-dimensional complex vector space $V$, 
then $d(\eta,\eta')$ is defined to be $\displaystyle\sup_{0\neq s\in V}|\ln\|s\|_\eta-\ln\|s\|_{\eta'}|$. This is a 
distance on the set of norms on $V$.} $\max(d(\varphi_n,\varphi_n'),d(\psi_n,\psi_n'))=o(n)$. Let $Z_n$ be the map from
$\{1,\ldots,h^0(X,nL)\}$ to $\mathbb R$ sending $i$ to the  logarithm of the $i^{\text{th}}$ 
eigenvalue (with multiplicity) of $\psi_n'$
with respect to $\varphi_n'$, considered as a random variable on the set $\{1,\ldots,h^0(X,nL)\}$ with the uniform 
distribution. Then the sequence of random variables $(Z_n/n)_{n\geqslant 1}$ converges in law to a probability 
distribution on $\mathbb R$ depending only on the pair $(\varphi,\psi)$.
\end{enonce*}

We recall that by definition the sequence of random variables $Z_n/n$ converges in law if for any continuous bounded 
function $h$ defined on $\mathbb R$ the sequence $(\mathbb E[h(Z_n/n)])_{n\geqslant 1}$ converges in $\mathbb R$.

The theorem \ref{Thm:thmprincipal} proved in \S5 is a little bit stronger than this statement.
It is valid for any sub-graded linear system  of subspaces $V_n\subset H^0(X,nL)$ satisfying conditions (a)-(c) of
section \S4.3. Moreover, it also applies to the functions $\widehat{\mu}_i$ defined in \S2.3, and which are
associated to pairs of (possibly non-hermitian) norms. When these norms are in fact hermitian, these functions are
equal to the logarithms of the eigenvalues of the transition matrix (cf. \S\ref{Sec:relationavecvp}).
This asymptotic distribution is a fine geometric invariant which measures the degree of non-proportionality of the
metrics $\varphi$ and $\psi$. It should be useful in the variational study of metrics on an invertible sheaf.

The proof of the main theorem uses various techniques drawn from algebraic and arithmetic geometry. In\S2-3 we 
introduce a Harder-Narisimhan type theory for finite dimensional vector spaces with two norms. This can be thought of 
as a geometric reformulation of the eigenvalues of the transition matrix between two Hermitian norms. This construction
has the advantage of being valid for non-Hermitian norms, which allows us to work directly with the sup norm. Moreover,
it is analogous to the Harder-Narasimhan filtration and polygon of a vector bundle on a smooth projective curve. 
In particular, the truncation results  (propositions \ref{Pro:troncatureC} and \ref{Pro:troncaturepadique}) are crucial
for the proof of the main theorem. Another important ingredient is the existence of the equilibrium energy as a limit,
presented in \S\ref{Sec:energiealequilibre}. For a complete complex graded system, this is a result of Berman and Boucksom 
\cite{Ber_Bou08} (where the limit is described.) Here, we use the Okounkov filtered semi-groups point of view 
developped in \cite{Boucksom_Chen}, analogous to that of Witt-Nystrom \cite{Nystrom11}. The combination of this method
with the Harder-Naransimhan formalism is extremely flexible and enables us to prove the existence of the equilibrium 
energy as a limit in the very general setting of a graded linear system on both complex and non-archimedean varieties
(cf. theorem \ref{Thm:energieequilibree} and its corollary \ref{Cor:thmlimite}). Finally, in Section 
\S\ref{Sec:demduthm} we prove a general version of the main theorem (cf. theorem \ref{Thm:thmprincipal} and remark
\ref{Rem:modification de metrique}).
 
\section{Slopes of a vector space equipped with two norms.}

In this section we develop the formalism of slopes and Harder-Narasimham filtrations for finite dimensional complex vector 
spaces equipped with a pair of  norms.
\subsection{Slopes and the Harder-Narasimhan filtration.}
Let $\mathcal C^H$ be the class of triplets $\overline{V}=(V,\varphi,\psi)$, where $V$ is a finite dimensional 
complex vector space and $\varphi$ and $\psi$ are two Hermitian norms on $V$. For any 
$\overline V=(V,\varphi,\psi)\in\mathcal C^H$, we let $\widehat{\deg}(\overline V)$ be the real number defined by
\begin{equation}\label{Equ:degree}-\ln\|s_1\wedge\cdots\wedge s_r\|_\varphi+\ln\|s_1\wedge\cdots\wedge s_r\|_{\psi},\end{equation}
where $(s_1,\ldots,s_r)$ is a basis of $V$. If the $V$ is non trivial then we let $\widehat{\mu}(\overline V)$ be the
quotient $\widehat{\deg}(\overline V)/\rang(V)$, which we call the \emph{slope} of $\overline V$. Unless otherwise 
specified, for any subspace $W$ of $V$ we will let $\overline W$ be the vector space  $W$ equipped with the induced
norms and  $\overline V/\overline W$ be the quotient space $V/W$ equipped with the quotient norms. The following 
relationship holds for any subspace $W$ of $V$~:
\begin{equation}\label{Equ:suite exacte}
\widehat{\deg}(\overline V)=\widehat{\deg}(\overline W)+\widehat{\deg}(\overline V/\overline W).
\end{equation}
\begin{prop}
Let $\overline V=(V,\varphi,\psi)$ be a non-trivial element of  $\mathcal C^H$. There is a unique subspace $V_{\des}$ in
$V$ satisfying the following properties:
\begin{enumerate}[(1)]
\item for any subspace $W\subset V$ we have that 
$\widehat{\mu}(\overline W)\leqslant\widehat{\mu}(\overline{V}_{\!\des})$,
\item if $W$ is a subspace of $V$ such that $\widehat{\mu}(\overline W)=\widehat{\mu}(\overline{V}_{\!\des})$ then
$W\subset V_{\des}$. 
\end{enumerate}
\end{prop}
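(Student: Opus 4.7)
The plan is to reduce the problem to the simultaneous diagonalisation of the two Hermitian forms. Pick a basis $(e_1,\dots,e_n)$ of $V$ which is orthonormal for $\varphi$ and orthogonal for $\psi$, ordered so that $\lambda_i:=\|e_i\|_{\psi}^2$ satisfies $\lambda_1\geqslant\cdots\geqslant\lambda_n>0$, and let $k$ denote the multiplicity of $\lambda_1$. For any non-zero subspace $W\subset V$ of dimension $r$, choose a basis $(v_1,\dots,v_r)$ and let $C=(c_{ij})$ be its matrix of coefficients in $(e_j)$. Expanding $v_1\wedge\cdots\wedge v_r$ in the induced basis of $\bigwedge^r V$ gives
\[e^{2\widehat{\deg}(\overline W)}=\frac{\|v_1\wedge\cdots\wedge v_r\|_{\psi}^2}{\|v_1\wedge\cdots\wedge v_r\|_{\varphi}^2}=\frac{\sum_{J}|\det C_J|^2\,\lambda_J}{\sum_{J}|\det C_J|^2},\]
where $J$ ranges over $r$-element subsets of $\{1,\dots,n\}$, $C_J$ is the $r\times r$ submatrix of $C$ with columns in $J$, and $\lambda_J:=\prod_{j\in J}\lambda_j$. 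The right-hand side is a convex combination of the quantities $\lambda_J$.

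Since $\lambda_J\leqslant\lambda_1^r$, with equality precisely when $J\subseteq\{1,\dots,k\}$, one deduces the uniform bound $\widehat{\mu}(\overline W)\leqslant\tfrac12\ln\lambda_1$, and equality holds if and only if $\det C_J=0$ for every $J\not\subseteq\{1,\dots,k\}$. By the Pl\"ucker embedding of the Grassmannian, this last condition is equivalent to $W\subseteq\mathrm{span}(e_1,\dots,e_k)$. Setting $V_{\des}:=\mathrm{span}(e_1,\dots,e_k)$, which being the $\lambda_1$-eigenspace of the $\varphi$-self-adjoint operator induced by $\psi$ is intrinsic and independent of the diagonalising basis, one computes directly $\widehat{\mu}(\overline{V}_{\!\des})=\tfrac12\ln\lambda_1$. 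Property (1) then follows from the upper bound, property (2) from its equality case, and uniqueness of $V_{\des}$ is automatic from (2).

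The main obstacle is the equality case of the weighted-mean inequality: one must verify that the joint vanishing of all minors $\det C_J$ outside $\binom{\{1,\dots,k\}}{r}$ genuinely forces $W$ to be contained in $\mathrm{span}(e_1,\dots,e_k)$. This is the standard Pl\"ucker fact that a non-zero decomposable $r$-vector lying in $\bigwedge^r W'$ represents an $r$-dimensional subspace of $W'$, and is easily proved by contracting the $r$-vector with suitable $(r-1)$-forms on $V$.
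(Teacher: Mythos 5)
Your proof is correct, but it takes a genuinely different route from the paper's. The paper defines $V_{\des}$ intrinsically as $\{x\in V : \|x\|_\psi = \lambda\|x\|_\varphi\}$, where $\lambda$ is the operator norm of the identity $(V,\varphi)\to(V,\psi)$; it then shows this is a subspace via the parallelogram law (the clever step) and bounds $\widehat\mu(\overline W)$ for arbitrary $W$ via Hadamard's inequality on $\Lambda^r W$, identifying the equality case with $W\subset V_{\des}$. You instead simultaneously diagonalise the two Hermitian forms, set $V_{\des}$ to be the top eigenspace, and compute $e^{2\widehat{\deg}(\overline W)}$ explicitly as a convex combination of products $\lambda_J$ of eigenvalues, reading off both the bound and its equality case via Plücker coordinates. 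Your argument makes the subspace property of $V_{\des}$ manifest (it is an eigenspace, so no parallelogram-law trick is needed), at the cost of the Plücker lemma that a decomposable $r$-vector vanishing outside $\Lambda^r(\mathrm{span}(e_1,\dots,e_k))$ represents a subspace of that span; the paper's argument is coordinate-free but needs the additivity structure established first. Both give the same $V_{\des}$ — your $\mathrm{span}(e_1,\dots,e_k)$ equals the set where the two norms are proportional with the maximal ratio $\lambda = \sqrt{\lambda_1}$, and your $\tfrac12\ln\lambda_1$ equals the paper's $\ln\lambda$.
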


\begin{proof}
Let $\lambda$ be the norm of the identity map from $(V,\|.\|_\varphi)$ to $(V,\|.\|_\psi)$. We will prove that the set 
\[V_{\des}=\{x\in V\,:\,\|x\|_{\psi}=\lambda \|x\|_{\varphi}\}\]
satisfies the conditions of the proposition. We start by checking that  $V_{\des}$ is a non-trivial subspace of $V$. By definition,  $V_{\des}$ contains at least one non-zero vector and is stable under multiplication by a complex scalar. We
need to check that  $V_{\des}$ is stable under addition. Let $x$ and $y$ be two elements of $V_{\des}$. As the norms 
$\varphi$ and $\psi$ are Hermitian, the parallelogram law states that 
\begin{gather*}
\|x+y\|_{\varphi}+\|x-y\|_{\varphi}=2(\|x\|_{\varphi}+\|y\|_{
\varphi}),\\
\|x+y\|_{\psi}+\|x-y\|_{\psi}=2(\|x\|_{\psi}+\|y\|_{\psi}).
\end{gather*}

Since $\lambda$ is the norm of the identity map from $(V,\|.\|_\varphi)$ to $(V,\|.\|_\psi)$ we have that 
$\|x+y\|_{\psi}\leqslant \lambda\|x+y\|_{\varphi}$ and $\|x-y\|_{\psi}\leqslant \lambda\|x-y\|_{\varphi}$. As $x$ and $y$ are
vectors in  $V_{\des}$ we have that $\|x\|_{\psi}=\lambda\|x\|_\varphi$ and $\|y\|_{\psi}=\lambda\|y\|_\varphi$. It follows
that
\[\begin{split}&\quad\;2\lambda(\|x\|_{\varphi}+\|y\|_{\varphi})=2(\|x\|_{\psi}+\|y\|_{
\psi})=\|x+y\|_{\psi}+\|x-y\|_{\psi}\\
&\leqslant\lambda(\|x+y\|_{\varphi}+\|x-y\|_{\varphi})=2\lambda(\|x\|_{\varphi}+\|y\|_{\varphi}),
\end{split}\]
so $\|x+y\|_{\psi}=\lambda\|x+y\|_{\varphi}$ and it follows that $x+y\in V_{\des}$.

In particular, as the norms $\|.\|_\psi$ and $\|.\|_\varphi$ are proportional with ratio $\lambda$ on  $V_{\des}$ we 
have that $\widehat{\mu}(\overline V_{\!\des})=\ln(\lambda)$. If $W$ is a subspace of $V$ then the identity map 
from $(\Lambda^rW,\|.\|_{\psi})$ to $(\Lambda^rW,\|.\|_{\varphi})$ has norm $\leqslant\lambda^r$ (by Hadamard's 
inequality). This inequality is an equality if and only if  the norms $\|.\|_\psi$ and $\|.\|_{\varphi}$ are proportional
with ratio $\lambda$ - or in other words, the space $W$ is contained in $V_{\des}$.
\end{proof}

Let $\overline V$ be a non-trivial element of $\mathcal C^H$. Let $\widehat{\mu}_{\max}(\overline V)$ be the slope of 
$\overline V_{\!\des}$, which we call the \emph{maximal slope} of $\overline V$. We say that $\overline V$ is
\emph{semi-stable} if $\widehat{\mu}_{\max}(\overline V)=\widehat{\mu}(\overline V)$, or equivalently $V_{\des}=V$. The
element $\overline V$ is semi-stable if and only if the two norms on $\overline V$ are proportional.

For any  $\overline V$, non-trivial element of $\mathcal C^H$, we  construct recursively a sequence of
subspaces of $V$ of the form
\begin{equation}\label{Equ:HN}0=V_0\subset V_1\subset\ldots\subset V_n=V\end{equation}
such that $V_{i}/V_{i-1}=(V/V_{i-1})_{\des}$ for any $i\in\{1,\ldots,n\}$ using the quotient norms on $V/V_{i-1}$. Each of 
the subquotients $\overline V_{\!i}/\overline V_{\!i-1}$ is a semi-stable element of $\mathcal C^H$. Moreover, if 
$\mu_i$ is the slope of $\overline V_{\!i}/\overline V_{\!i-1}$, then we have that:
\begin{equation}\label{Equ:pentes}
\mu_1>\mu_2>\ldots>\mu_n.
\end{equation}
These numbers are called the \emph{intermediate slopes} of $\overline V$. The flag \eqref{Equ:HN} is called the 
\emph{Harder-Narasimhan filtration} of $\overline V$.

\begin{defi}\label{Def:va}
Let $\overline V$ be a non-zero element of $\mathcal C^H$  with its Harder-Narasimham filtration and 
intermediate slopes as defined in \eqref{Equ:HN} and \eqref{Equ:pentes}. We let $Z_{\overline V}$ be the
random variable with values in $\{\mu_1,\ldots,\mu_n\}$ such that 
\[\mathbb P(Z_{\overline V}=\mu_i)=\frac{\rang(V_i/V_{i-1})}{\rang(V)}\] for any $i\in\{1,\ldots,n\}$.
\end{defi}

\begin{rema}
The above constructions are analogues of Harder-Narasimhan theory for vector bundles on a regular projective curve or
hermitian adelic bundles on a number field. As in  \cite[\S2.2.2]{Chen10b}, we can include the Harder-Narasimhan 
filtration and the intermediate slopes in a decreasing $\mathbb R$ filtration. However, contrary to the geometric and
arithmetic cases, this $\mathbb R$-filtration is not functorial, as explained in \cite[remarque 5.10]{Chen_hn}.
Moreover, the Harder-Narasimhan filtration is not necessarily the only filtration whose sub-quotients are semi-stable
with strictly decreasing slopes, as can be seen using the counter-example in \cite[\S4 exemple 2]{Chen_hn}. 
\end{rema}

Let $\overline V$ be an element of $\mathcal C^H$ of rank $r>0$. We let $\widetilde P_{\overline V}$ be the function on
$[0,r]$ whose graph is the upper boundary of the convex closure of the set of points of the form 
$(\rang(W),\widehat{\deg}(\overline W))$. This is a concave function which is affine on each interval $[i-1,i]$ 
($i\in\{1,\ldots,r\}$). We call it the Harder-Narasimhan \emph{polygon} of $\overline V$. By definition, 
$\widetilde P_{\overline V}(0)=0$ and $\widetilde P_{\overline V}(r)=\widehat{\deg}(V)$. For any $i\in\{1,\ldots,r\}$, 
we let $\widehat{\mu}_i(\overline V)$ be the slope of the function $\widetilde P_{\overline V}$ on the interval 
$[i-1, i]$, which we call the  $i^{\text{th}}$ \emph{slope} of $\overline V$. We also introduce a normalised version of 
the Harder-Narasimhan polygon, defined by
\[P_{\overline V}(t):=\frac{1}{\rang(V)}\widetilde P_{\overline V}(t\rang(V)),\quad t\in[0,1].\] 
It follows from the relation $\widetilde P_{\overline{V}}(r)=\widehat{\deg}(V)$ that 
\begin{equation}\label{Equ:somme des pentes}\widehat{\deg}(\overline V)=\widehat{\mu}_1(\overline V)+\cdots+\widehat{\mu}_r(\overline V).\end{equation}
Moreover, the distribution of the random variable $Z_{\overline V}$ (cf. definition \ref{Def:va}) is given by 
\[\frac 1r\sum_{i=1}^r\delta_{\widehat{\mu}_i(\overline V)},\]
where $\delta_a$ is a Dirac measure supported at $a$.

The normalised polygon $P_{\overline V}$, the random variable $Z_{\overline V}$ and the slopes of $\overline V$ 
are linked by the following simple formula.
\begin{equation}\label{Equ:egalite fondamentale}
P_{\overline V}(1)=\mathbb E[Z_{\overline V}]=\widehat{\mu}(\overline V).
\end{equation}
\subsection{The link with eigenvalues.}\label{Sec:relationavecvp}
The Harder-Narasimhan polygon and its intermediate slopes can be thought of as an intrinsic interpretation of the 
eigenvalues and eigenspaces of the transition matrix between two Hermitian norms. Indeed, given an element 
$\overline{V}=(V,\varphi,\psi)\in\mathcal C^H$ and an orthonormal basis $\boldsymbol{e}=(e_i)_{i=1}^r$ for 
$V$ with respect to $\|.\|_{\varphi}$ we can construct a Hermitian matrix   
\[A_{\overline V\!,\boldsymbol{e}}=\begin{pmatrix}
\langle e_1,e_1\rangle_{\psi}&\langle e_1,e_2\rangle_{\psi}&\cdots&\langle e_1,e_r\rangle_{\psi}\\
\langle e_2,e_1\rangle_{\psi}&\langle e_2,e_2\rangle_{\psi}&\cdots&\langle e_1,e_r\rangle_{\psi}\\
\vdots&\vdots&\ddots&\vdots\\
\langle e_r,e_1\rangle_{\psi}&\langle e_r,e_2\rangle_{\psi}&\cdots&\langle e_r,e_r\rangle_{\psi}
\end{pmatrix}\]
If the intermediate slopes of $\overline V$ are  $\mu_1>\mu_2>\ldots>\mu_n$ then the eigenvalues of 
$A_{\overline V\!,\boldsymbol{e}}$ are $\mathrm{e}^{2\mu_1},\ldots,\mathrm{e}^{2\mu_n}$. Moreover, if for any 
$i\in\{1,\ldots,n\}$ we let  $V^{(i)}$ be the eigenspace associated to the eigenvalue $\mathrm{e}^{2\mu_i}$ of the 
endomorphism of $V$ given by the matrix $A_{\overline V\!,\boldsymbol{e}}$ in the basis $\boldsymbol{e}$ then the flag
\[0\subsetneq V^{(1)}\subsetneq V^{(1)}+V^{(2)}\subsetneq\ldots
\subsetneq V^{(1)}+\cdots+V^{(n)}=V\]
is the Harder-Narasimhan filtration of $\overline V$.

The fact that $\varphi$ and $\psi$ are proportional on each of the subspaces $V^{(i)}$ implies that there is a complete
flag (which will be in general finer than the Harder-Narasimhan filtration of $\overline V$)
\[0=V_0\subsetneq V_1\subsetneq \ldots \subsetneq V_r=V\]
such that $\widehat{\deg}(\overline V_i/\overline V_{\!i-1})=\widehat{\mu}_i(\overline V)$. In particular, $P_{\overline V}(i)=\widehat{\deg}(\overline V_{\!i})$. This can be thought of as a version of the Courant-Fischer theorem for 
symmetric or Hermitian matrices. 
\subsection{Generalisation to arbitrary norms}\label{Subsec:polygonenorme} The above constructions can be naturally
generalised to spaces equipped with two arbitrary norms. Let $\mathcal C$ be the class of finite-dimensional complex 
vector spaces equipped with two (not necessarily Hermitian) norms. For any non-trivial element 
$\overline V=(V,\varphi,\psi)$ in $\mathcal C$ we let $\widehat{\deg}(\overline V)$ be the number 
\[\ln\frac{\mathrm{vol}(\mathcal B(V,\varphi))}{\mathrm{vol}(\mathcal B(V,\psi))},\]
where $\mathcal B(V,\varphi)$ and $\mathcal B(V,\psi)$ are the unit balls with respect to the norms $\varphi$ and 
$\psi$ respectively, and $\mathrm{vol}$ is a Haar measure on $V$. This definition is independent of the choice of Haar 
measure $\mathrm{vol}$. Moreover, when the norms $\varphi$ and $\psi$ are Hermitian, it is equal to the number defined 
in \eqref{Equ:degree}. As in the Hermitian case, we let $\widetilde P_{\overline V}$ be the concave function on 
$[0,\rang(V)]$ whose graph is the upper boundary of the convex closure of the set of points of the form 
$(\rang(W),\widehat{\deg}(W))$, where $W$ is a member of the set of subspaces of $V$. For any 
$i\in\{1,\ldots,\rang(V)\}$, we let $\widehat{\mu}_i(\overline V)$ be the slope of the function 
$\widetilde P_{\overline V}$ on the interval $[i-1,i]$. The equality \eqref{Equ:somme des pentes} holds in this more general
context. We let $Z_{\overline V}$ be a random variable whose law is an average of Dirac masses at the intermediate
slopes $\widehat{\mu}_i(\overline V)$~:
\[\text{the law of $Z_{\overline V}$ is }\frac{1}{\rang(V)}\sum_{i=1}^{\rang(V)}\delta_{\widehat{\mu}_i(\overline V)}.\]
We can also introduce a normalised Harder-Narasimhan polygon :
\begin{equation}\label{Equ:polygone normalise}P_{\overline V}(t)=\frac{1}{\rang(V)}\widetilde P_{\overline V}(t\rang(V)),\quad t\in[0,1].\end{equation}
Equation \eqref{Equ:egalite fondamentale} holds in this more general setting : we have that 
\begin{equation} \label{Equ:egalite fondamentale2}
P_{\overline V}(1)=\mathbb E[Z_{\overline V}]=\widehat{\mu}(\overline V).
\end{equation}
The following result compares Harder-Narasimhan polygons.
\begin{prop}\label{Pro:comparaisonnorme}
Let $(V,\varphi,\psi)$ be an element of $\mathcal C$. If $\psi'$ is another norm on $V$ such that\footnote{In other 
words, for any $x\in V$ we have that $\|x\|_{\psi'}\leqslant\|x\|_{\psi}$.} $\psi'\leqslant\psi$ then we have that 
$\widetilde P_{(V,\varphi,\psi')}\leqslant \widetilde P_{(V,\varphi,\psi)}$ as functions on $[0,\rang(V)]$.
\end{prop}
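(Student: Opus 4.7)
The plan is to reduce the polygon inequality to the corresponding pointwise inequality of degrees on every subspace, and then exploit the fact that both polygons are upper concave envelopes of sets of points with the same set of possible abscissas.

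First I would verify the subspace-level inequality: for every subspace $W \subset V$, one has $\widehat{\deg}(W,\varphi|_W,\psi'|_W) \leq \widehat{\deg}(W,\varphi|_W,\psi|_W)$. Indeed, the hypothesis $\psi' \leq \psi$ passes to $W$, so every vector of norm at most one for $\psi|_W$ still has norm at most one for $\psi'|_W$. Hence $\mathcal B(W,\psi|_W) \subset \mathcal B(W,\psi'|_W)$ and consequently $\mathrm{vol}(\mathcal B(W,\psi|_W)) \leq \mathrm{vol}(\mathcal B(W,\psi'|_W))$. Since $\widehat{\deg}(W,\varphi,\psi) = \ln\bigl(\mathrm{vol}(\mathcal B(W,\varphi))/\mathrm{vol}(\mathcal B(W,\psi))\bigr)$ with $\psi$ (or $\psi'$) appearing only in the denominator, this yields the desired inequality.

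Next I would translate this to an inequality between the sets of points whose upper concave envelopes define the polygons. Write
\[S_\eta = \{(\rang(W),\widehat{\deg}(W,\varphi,\eta)) : W \subset V \text{ a subspace}\}\]
for $\eta \in \{\psi,\psi'\}$. By the previous step, for each subspace $W$ the point in $S_{\psi'}$ of abscissa $\rang(W)$ coming from $W$ lies weakly below the analogous point in $S_\psi$. In particular, the two sets have identical possible abscissas $\{0,1,\ldots,r\}$, and their ordinates are related by a pointwise inequality indexed by $W$.

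Finally I would upgrade this to an inequality of the upper concave envelopes. For any $t \in [0,r]$, by the definition of $\widetilde P_{(V,\varphi,\psi')}$ as the upper boundary of $\mathrm{conv}(S_{\psi'})$, one can write
\[\widetilde P_{(V,\varphi,\psi')}(t) = \sup\Bigl\{\sum_{j}\lambda_j\,\widehat{\deg}(W_j,\varphi,\psi') : \lambda_j \geq 0,\ \sum_j\lambda_j = 1,\ \sum_j\lambda_j\rang(W_j) = t\Bigr\}.\]
For any such convex combination, replacing $\widehat{\deg}(W_j,\varphi,\psi')$ by $\widehat{\deg}(W_j,\varphi,\psi)$ only increases each summand by the first step, and the resulting quantity is at most $\widetilde P_{(V,\varphi,\psi)}(t)$ by the analogous characterization. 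Taking the supremum gives $\widetilde P_{(V,\varphi,\psi')}(t) \leq \widetilde P_{(V,\varphi,\psi)}(t)$, as desired. The argument is essentially routine; the only small care needed is in the last step, where one must make sure to use the upper-envelope characterization rather than just evaluating at integer points, since the polygons are defined as concave envelopes on the whole interval $[0,r]$.
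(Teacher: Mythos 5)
Your proof is correct and follows essentially the same approach as the paper: first observe that $\psi'\leqslant\psi$ yields $\widehat{\deg}(W,\varphi,\psi')\leqslant\widehat{\deg}(W,\varphi,\psi)$ for each subspace $W$ via the reversed inclusion of unit balls, and then pass to the upper concave envelopes. Your last step just spells out (via convex combinations) the same convexity fact the paper invokes tersely, namely that every point of the primed set lies below the graph of the concave function $\widetilde P_{(V,\varphi,\psi)}$, hence so does its convex hull.
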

\begin{proof}
For any subspace $W$ on $V$ we have that $\mathcal B(W,\psi)\subset\mathcal B(W,\psi')$ and it follows that 
$\widehat{\deg}(W,\varphi,\psi)\geq\widehat{\deg}(W,\varphi,\psi')$. The point $(\rang(W),\widehat{\deg}(W,\varphi,\psi'))$ is therefore always below the graph of $\widetilde P_{(V,\varphi,\psi)}$. It follows that 
$\widetilde P_{(V,\varphi,\psi')}\leqslant \widetilde P_{(V,\varphi,\psi)}$.
\end{proof}
\begin{rema}
Similarly, if $(V,\varphi,\psi)$ is an element of $\mathcal C$ and $\varphi'$ is another norm on $V$ such that 
$\varphi'\geqslant\varphi$ then we have that $\widetilde P_{(V,\varphi',\psi)}\leqslant \widetilde P_{(V,\varphi,\psi)}$ as
functions on $[0,\rang(V)]$.
\end{rema}
If $V$ is a non-trivial finite dimensional complex vector space and $\psi$ and $\psi'$ are two norms on $V$ then we
denote by $d(\psi,\psi')$ the quantity
\[\sup_{0\neq x\in V}\big|\ln\|x\|_{\psi}-\ln\|x\|_{\psi'}\big|.\]
\begin{coro}\label{Cor:compraisonpolygone}
Let $(V,\varphi,\psi)$ be a non-trivial element of $\mathcal C$ and let $\varphi'$ and $\psi'$ be two norms on $V$. For
any $t\in [0,\rang(V)]$  we have that 
\[|\widetilde P_{(V,\varphi,\psi)}(t)-\widetilde P_{(V,\varphi',\psi')}(t)|\leqslant (d(\varphi,\varphi')+d(\psi,\psi'))\,t.\]
\end{coro}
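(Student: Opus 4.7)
The plan is a two-step reduction. First, I would compare the degrees $\widehat{\deg}(W,\varphi,\psi)$ and $\widehat{\deg}(W,\varphi',\psi')$ on a single fixed subspace $W$. Then I would transfer this pointwise bound to the polygons via the convex-hull construction used to define $\widetilde P$.

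For the first step I plan to show that for every subspace $W\subseteq V$ of rank $k$,
\[\bigl|\widehat{\deg}(W,\varphi,\psi)-\widehat{\deg}(W,\varphi',\psi')\bigr|\leqslant k\bigl(d(\varphi,\varphi')+d(\psi,\psi')\bigr).\]
Unwinding $\widehat{\deg}(W,\alpha,\beta)=\ln\mathrm{vol}(\mathcal B(W,\alpha))-\ln\mathrm{vol}(\mathcal B(W,\beta))$ reduces this to controlling each of the two volume ratios separately. Writing $a=d(\alpha,\alpha')$, the inequality $d(\alpha|_W,\alpha'|_W)\leqslant a$ gives $\mathcal B(W,\alpha)\subseteq e^{a}\mathcal B(W,\alpha')$ and conversely; since dilating by $e^{a}$ in a $k$-dimensional space multiplies Haar volume by $e^{ka}$, this yields $|\ln\mathrm{vol}(\mathcal B(W,\alpha))-\ln\mathrm{vol}(\mathcal B(W,\alpha'))|\leqslant ka$. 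Applying this to the two pairs $(\varphi,\varphi')$ and $(\psi,\psi')$ and using the triangle inequality on $\widehat{\deg}$ then gives the claim.

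For the second step I would exploit that, by construction, $\widetilde P_{(V,\varphi,\psi)}(t)$ is the abscissa-$t$ value of the upper boundary of the convex hull of the compact set $S_{\varphi,\psi}=\{(\rang W,\widehat{\deg}(W,\varphi,\psi)):W\subseteq V\}$ in $[0,\rang(V)]\times\mathbb R$. The point $(t,\widetilde P_{(V,\varphi,\psi)}(t))$ therefore lies in the closed convex hull of $S_{\varphi,\psi}$, so I may write
\[\widetilde P_{(V,\varphi,\psi)}(t)=\sum_{i}\lambda_i\,\widehat{\deg}(W_i,\varphi,\psi),\quad \lambda_i\geqslant 0,\quad \sum_i\lambda_i=1,\quad \sum_i\lambda_i\rang(W_i)=t.\]
Plugging the bound from step one into each term and using the rank-sum relation yields
\[\widetilde P_{(V,\varphi,\psi)}(t)\leqslant\sum_i\lambda_i\widehat{\deg}(W_i,\varphi',\psi')+t\bigl(d(\varphi,\varphi')+d(\psi,\psi')\bigr),\]
and the weighted sum on the right, being the ordinate of a point in the convex hull of $S_{\varphi',\psi'}$ at abscissa $t$, is at most $\widetilde P_{(V,\varphi',\psi')}(t)$. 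Exchanging the roles of the primed and unprimed systems yields the reverse inequality.

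The only point that requires a little care is the representation of the polygon value as a convex combination of degrees of genuine subspaces with weights whose ranks sum to $t$; this reduces to a standard Carath\'eodory argument in the plane. The rest is elementary, the real content being the volume-of-ball estimate in step one.
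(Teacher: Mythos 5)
Your proof is correct, but it takes a different route from the paper's. The paper first establishes a monotonicity lemma (Proposition~\ref{Pro:comparaisonnorme}: if $\psi'\leqslant\psi$ then $\widetilde P_{(V,\varphi,\psi')}\leqslant\widetilde P_{(V,\varphi,\psi)}$, proved by observing that all points $(\rang W,\widehat{\deg}(W,\varphi,\psi'))$ lie under the concave graph of $\widetilde P_{(V,\varphi,\psi)}$), and then deduces the corollary by sandwiching $\psi'$ between the two scaled norms $\mathrm{e}^{\mp d(\psi,\psi')}\psi$ and using that such a scaling translates the entire polygon by the exact affine function $\mp d(\psi,\psi')\,t$. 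You instead bound $|\widehat{\deg}(W,\varphi,\psi)-\widehat{\deg}(W,\varphi',\psi')|$ directly for every $W$ (via the inclusions between scaled unit balls) and then propagate this to the polygon by writing the value $\widetilde P(t)$ as a Carath\'eodory convex combination of degree points with matched abscissa. Both are valid; the paper's sandwich argument is somewhat slicker because the monotone case reduces the transfer step to a one-line concavity observation, whereas your Carath\'eodory representation is a slightly heavier tool than strictly necessary. Indeed, once you have the pointwise inequality $\widehat{\deg}(W,\varphi,\psi)\leqslant\widehat{\deg}(W,\varphi',\psi')+C\rang(W)$ with $C=d(\varphi,\varphi')+d(\psi,\psi')$, you could simply observe that all points of $S_{\varphi,\psi}$ lie below the concave function $t\mapsto\widetilde P_{(V,\varphi',\psi')}(t)+Ct$, which forces the convex hull (hence $\widetilde P_{(V,\varphi,\psi)}$) below it as well, avoiding the explicit convex-combination bookkeeping. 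On the other hand, your approach has the mild advantage of being self-contained and symmetric in the primed and unprimed data, not needing the intermediate monotonicity proposition.
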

\begin{proof}
We denote by $\psi_1$ and $\psi_2$ the norms on $V$ such that
\[\|.\|_{\psi_1}=\mathrm{e}^{-d(\psi,\psi')}\|.\|_{\psi}\text{ and }
\|.\|_{\psi_2}=\mathrm{e}^{d(\psi,\psi')}\|.\|_{\psi}.\]
We have that $\psi_1\leqslant\psi'\leqslant\psi_2$. Moreover, for any $t\in[0,1]$ we have that
\[\widetilde P_{(V,\varphi,\psi_1)}(t)=\widetilde P_{(V,\varphi,\psi)}(t)-d(\psi,\psi')\,t,\quad \widetilde P_{(V,\varphi,\psi_1)}(t)=\widetilde P_{(V,\varphi,\psi)}(t)+d(\psi,\psi')\,t.\]
By the above proposition we have that 
\[|\widetilde P_{(V,\varphi,\psi)}(t)-\widetilde P_{(V,\varphi,\psi')}(t)|\leqslant d(\psi,\psi')\,t.\]
By the same argument we have that 
\[|\widetilde P_{(V,\varphi,\psi')}(t)-\widetilde P_{(V,\varphi',\psi')}(t)|\leqslant d(\varphi,\varphi')\,t.\]
The sum of these two inequalities gives the required result.
\end{proof}
\subsection{John and L\"owner norms} Whilst the elements of $\mathcal C$ do not generally satisfy 
\eqref{Equ:suite exacte}, John and L\"owner's ellipsoid technique enables us to show that \eqref{Equ:suite exacte}  holds up
to an error term. Indeed, if $V$ is a complex vector space of rank $r>0$ and $\phi$ is a norm on $V$ then we can find 
two Hermitian norms $\phi_L$ and $\phi_J$ which satisfy the following properties (cf. \cite[page 84]{Thompson96})~:
\[\frac{1}{\sqrt{r}}\|.\|_{\phi_J}\leqslant\|.\|_{\phi}
\leqslant\|.\|_{\phi_J},\quad
\|.\|_{\phi_L}\leqslant\|.\|_{\phi}\leqslant\sqrt{r}\|.\|_{\phi_L}\]
\begin{prop}\label{Pro:comparaisondepolygone}
Let $\overline V=(V,\varphi,\psi)$ be an element of $\mathcal C$. If 
\[0=V_0\subsetneq V_1\subsetneq\ldots\subsetneq V_n=V\]
is a flag of subspaces of $V$ then we have that
\begin{equation}\label{Equ:sommesousquotient}\bigg|\widehat{\deg}(V)-\sum_{i=1}^n\widehat{\deg}(\overline V_i/
\overline V_{i-1})\bigg|\leqslant\rang(V)\ln(\rang(V)).\end{equation}
\end{prop}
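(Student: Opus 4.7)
The strategy is to reduce the statement to the Hermitian case, where exact additivity \eqref{Equ:suite exacte} is available, by approximating $\varphi$ and $\psi$ by Hermitian norms via the John--Löwner construction.

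More precisely, write $r = \rang(V)$ and let $\varphi'$ and $\psi'$ be Hermitian norms on $V$ (for instance the John ellipsoid of $\varphi$ and the Löwner ellipsoid of $\psi$) satisfying
\[d(\varphi,\varphi')\leqslant \tfrac{1}{2}\ln(r),\quad d(\psi,\psi')\leqslant \tfrac{1}{2}\ln(r).\]
Since $\varphi'$ and $\psi'$ are Hermitian, the induced norms on each subspace $V_i$ and each quotient $V_i/V_{i-1}$ are still Hermitian, so the additive relation \eqref{Equ:suite exacte} applies telescopically along the flag and yields
\[\widehat{\deg}(V,\varphi',\psi')=\sum_{i=1}^n\widehat{\deg}(\overline V_i/\overline V_{i-1},\varphi',\psi').\]

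Next I would control the passage back to the original norms. The corollary \ref{Cor:compraisonpolygone}, applied to the whole space $V$, gives
\[\bigl|\widehat{\deg}(V,\varphi,\psi)-\widehat{\deg}(V,\varphi',\psi')\bigr|\leqslant r\bigl(d(\varphi,\varphi')+d(\psi,\psi')\bigr)\leqslant r\ln(r).\]
For each subquotient $V_i/V_{i-1}$ the induced norms from the global John/Löwner approximations remain at distance at most $\tfrac{1}{2}\ln(r)$ from the induced original norms (the inequalities defining the ellipsoids transfer to subspaces and quotients with the same constants). Thus, applying the corollary on each subquotient and summing,
\[\sum_{i=1}^n\bigl|\widehat{\deg}(\overline V_i/\overline V_{i-1},\varphi,\psi)-\widehat{\deg}(\overline V_i/\overline V_{i-1},\varphi',\psi')\bigr|\leqslant \sum_{i=1}^n\rang(V_i/V_{i-1})\ln(r)=r\ln(r).\]
Combining these three bounds by the triangle inequality gives the desired estimate (up to the exact multiplicative constant, which can be absorbed or sharpened by choosing the Hermitian pair more cleverly).

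The only real subtlety, and what I would consider the main obstacle, is checking that the John/Löwner inequalities descend cleanly to every subspace and every quotient norm with the \emph{same} distortion constant $\sqrt{r}$: for subspaces this is immediate, while for quotient norms it follows from passing to the infimum over a translate, but needs to be written out carefully to ensure the $\tfrac12\ln(r)$ bound on $d(\cdot,\cdot)$ is preserved at each step of the flag. Once this is in place, the triangle inequality assembles the estimate.
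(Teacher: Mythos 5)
Your strategy is the same as the paper's: approximate by Hermitian norms via John--L\"owner, use the exact additivity \eqref{Equ:suite exacte} along the flag, then control the passage back to the original norms. Your remark about the distortion constants descending to subspaces and quotient norms is correct (for subspaces by restriction, for quotients by taking infima over cosets) and does need to be said, but it is not really the main subtlety.

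The real issue is that your bookkeeping incurs the error $r\ln r$ twice --- once on the full space $V$, and once more after summing over the subquotients --- so you prove the bound with $2r\ln(r)$ rather than the stated $r\ln(r)$. You flag this and suggest the Hermitian pair can be chosen more cleverly; that is exactly what the paper does, and the trick is worth making explicit. It pairs the L\"owner ellipsoid of $\varphi$ with the John ellipsoid of $\psi$, i.e.\ a Hermitian pair satisfying the \emph{one-sided} bounds $\|\cdot\|_{\varphi_L}\leqslant\|\cdot\|_{\varphi}$ and $\|\cdot\|_{\psi_J}\geqslant\|\cdot\|_{\psi}$. Because $\widehat{\deg}(W,\cdot,\cdot)$ is monotone in each argument (decreasing in the first norm, increasing in the second --- essentially Proposition \ref{Pro:comparaisonnorme}), those one-sided bounds pass to every subquotient and yield
\[\widehat{\deg}(\overline{V}_i/\overline{V}_{i-1},\varphi,\psi)\leqslant\widehat{\deg}(\overline{V}_i/\overline{V}_{i-1},\varphi_L,\psi_J)\]
with \emph{no} error term. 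Summing and then paying the cost of a single comparison on $V$ between $(\varphi,\psi)$ and $(\varphi_L,\psi_J)$ gives the one-sided inequality $\widehat{\deg}(\overline V)\geqslant\sum_i\widehat{\deg}(\overline V_i/\overline V_{i-1})-r\ln(r)$; the other inequality follows symmetrically from the pair $(\varphi_J,\psi_L)$. A symmetric two-sided approximation, as you use, genuinely cannot avoid the extra factor of two --- the asymmetric one-sided pairing is the point.
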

\begin{proof}
Consider the object $(V,\varphi_L,\psi_J)$, which is an element of $\mathcal C^H$. The following equation holds by formula 
\eqref{Equ:suite exacte}): 
\[\widehat{\deg}(V,\varphi_L,\psi_J)=\sum_{i=1}^n\widehat{\deg}(V_i/V_{i-1},\varphi_L,\psi_J).\]
Moreover, since $\varphi_L\leqslant\varphi$ and $\psi_J\geqslant\psi$, we have that
\[\widehat{\deg}(V_i/V_{i-1},\varphi,\psi)\leqslant\widehat{\deg}(V_i/V_{i-1},\varphi_L,\psi_J).\]
It follows that
\begin{equation}\label{Equ:degfil}\widehat{\deg}(V,\varphi_L,\psi_J)\geqslant
\sum_{i=1}^n\widehat{\deg}(V_i/V_{i-1},\varphi,\psi).\end{equation}
Moreover, it follows from the relations \[\|.\|_{\varphi}\leqslant\sqrt{\rang(V)}\|.\|_{\varphi_L}\quad\text{and}\quad
 \|.\|_{\psi}\geqslant (\sqrt{\rang(V)})^{-1}\|.\|_{\psi_J}\] that
\begin{equation}\label{Equ:degfil2}\widehat{\deg}(V,\varphi,\psi)\geqslant\widehat{\deg}(V,\varphi_L,\psi_J)-\rang(V)\ln(\rang(V)).\end{equation}
It follows from the inequalities \eqref{Equ:degfil} and \eqref{Equ:degfil2} that
\[\widehat{\deg}(\overline V)\geqslant\sum_{i=1}^n\widehat{\deg}(\overline V_i/\overline V_{i-1})-\rang(V)\ln(\rang(V)).\]
Applying the same argument to $(V,\varphi_J,\psi_L)$ we can show that
\[\widehat{\deg}(\overline V)\leqslant\sum_{i=1}^n\widehat{\deg}(\overline V_i/\overline V_{i-1})+\rang(V)\ln(\rang(V)).\]
This completes the proof of the proposition.
\end{proof}
\subsection{Truncation}\label{SubSec:Troncature} Let $V$ be a finite-dimensional complex vector space. If $\varphi$ 
is a norm on $V$ and $a$ is a real number then we let  $\varphi(a)$ be the norm on $V$ such that
\[\forall\,x\in V,\quad \|x\|_{\varphi(a)}=\mathrm{e}^{a}\|x\|_{\varphi}.\]
If $\varphi$ and $\psi$ are two norms on $V$ then we denote by $\varphi\vee\psi$ the norm on $V$ such that
\[\forall\,x\in V,\quad \|x\|_{\varphi\vee\psi}=\max(\|x\|_{\varphi},\|x\|_{\psi}).\]
\begin{prop}\label{Pro:troncatureC}
Let $\overline V=(V,\varphi,\psi)$ be a non-zero element of $\mathcal C$ and let $a$ be a real number. We have that 
\[\bigg|\widehat{\deg}(V,\varphi,\psi\vee\varphi(a))-\sum_{i=1}^{\rang(V)}\max(\widehat{\mu}_i(\overline V),a)\bigg|\leqslant 2\rang(V)\ln(\rang(V))+\frac{\rang(V)}{2}\ln(2).\]
\end{prop}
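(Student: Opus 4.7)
My plan is to combine a direct Hermitian computation via simultaneous diagonalization with the John--L\"owner reduction used in the proof of Proposition~\ref{Pro:comparaisondepolygone}. The $\tfrac{\rang(V)}{2}\ln 2$ term in the bound should come from the Hermitian step, while the two copies of $\rang(V)\ln\rang(V)$ should come respectively from replacing $(\varphi,\psi)$ by Hermitian approximants on the degree side, and from transferring the resulting slope sums back to the original pair.

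I would first treat the Hermitian case. By Section~\ref{Sec:relationavecvp}, there is a basis $(e_i)_{i=1}^{\rang(V)}$ orthonormal for $\varphi$ and orthogonal for $\psi$, with $\|e_i\|_\psi=\mathrm{e}^{\lambda_i}$ and $\lambda_i=\widehat{\mu}_i(\overline V)$. I would then introduce the auxiliary Hermitian norm $\phi_*$ for which $(e_i)$ is orthogonal with $\|e_i\|_{\phi_*}=\mathrm{e}^{\max(\lambda_i,a)}$, so that simultaneous diagonality immediately gives $\widehat{\deg}(V,\varphi,\phi_*)=\sum_{i=1}^{\rang(V)}\max(\lambda_i,a)$. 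Applying the elementary inequality $\max(u,v)\leqslant u+v\leqslant 2\max(u,v)$ coordinate-wise to the pair $(\|x\|_\psi^2,\mathrm{e}^{2a}\|x\|_\varphi^2)$ then yields $\psi\vee\varphi(a)\leqslant\phi_*\leqslant\sqrt{2}(\psi\vee\varphi(a))$, hence $d(\psi\vee\varphi(a),\phi_*)\leqslant\tfrac{1}{2}\ln 2$; corollary~\ref{Cor:compraisonpolygone} then closes the Hermitian case with error $\tfrac{\rang(V)}{2}\ln 2$.

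For arbitrary $(\varphi,\psi)$ I would switch to John--L\"owner Hermitian approximants $\varphi_L\leqslant\varphi$ and $\psi_L\leqslant\psi$, which satisfy $d(\varphi,\varphi_L),\,d(\psi,\psi_L)\leqslant\tfrac{1}{2}\ln\rang(V)$. Since the pointwise supremum of norms is $d$-contractive, i.e.\ $d(\eta_1\vee\eta_2,\eta_1'\vee\eta_2')\leqslant\max(d(\eta_1,\eta_1'),d(\eta_2,\eta_2'))$, and $d(\varphi(a),\varphi_L(a))=d(\varphi,\varphi_L)$, I would obtain $d(\psi\vee\varphi(a),\psi_L\vee\varphi_L(a))\leqslant\tfrac{1}{2}\ln\rang(V)$. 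Corollary~\ref{Cor:compraisonpolygone} then gives
\[
|\widehat{\deg}(V,\varphi,\psi\vee\varphi(a))-\widehat{\deg}(V,\varphi_L,\psi_L\vee\varphi_L(a))|\leqslant\rang(V)\ln\rang(V).
\]

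The main obstacle is comparing $\sum_i\max(\widehat{\mu}_i(V,\varphi,\psi),a)$ and $\sum_i\max(\widehat{\mu}_i(V,\varphi_L,\psi_L),a)$: individual slopes of two perturbed polygons need not be close even when the polygons themselves are close pointwise. My plan is to bypass the slope-by-slope comparison by rewriting the sum as a Legendre-type maximum. Since $\widehat{\mu}_1\geqslant\cdots\geqslant\widehat{\mu}_{\rang(V)}$, one checks that
\[
\sum_{i=1}^{\rang(V)}\max(\widehat{\mu}_i(\overline V),a)=\max_{0\leqslant j\leqslant\rang(V)}\bigl(\widetilde P_{\overline V}(j)+a(\rang(V)-j)\bigr).
\]
The uniform polygon bound of corollary~\ref{Cor:compraisonpolygone}, namely $|\widetilde P_{(V,\varphi,\psi)}(j)-\widetilde P_{(V,\varphi_L,\psi_L)}(j)|\leqslant j\ln\rang(V)\leqslant\rang(V)\ln\rang(V)$, then forces the two Legendre maxima to differ by at most $\rang(V)\ln\rang(V)$. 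Summing the three error contributions yields the stated $2\rang(V)\ln\rang(V)+\tfrac{\rang(V)}{2}\ln 2$.
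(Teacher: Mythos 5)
Your proposal is correct and follows essentially the same route as the paper's proof: the Legendre-type identity $\sum_i\max(\widehat{\mu}_i,a)=\sup_t(\widetilde P(t)-at)+ar$, the Hermitian case handled by simultaneous diagonalization plus the $\sqrt{2}$-comparison between $\phi_*$ and $\psi\vee\varphi(a)$, and the passage to general norms via John--L\"owner Hermitian approximants together with the $d$-contractivity of $\vee$ and Corollary~\ref{Cor:compraisonpolygone}. The only cosmetic difference is that you insist on the one-sided L\"owner approximant $\varphi_L\leqslant\varphi$, whereas the paper just takes any Hermitian $\varphi_1,\psi_1$ at $d$-distance $\leqslant\tfrac12\ln\rang(V)$; the one-sidedness is not actually used, so the two arguments coincide.
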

\begin{proof} Let $r$ be the dimension of $V$.
We will first prove the following equation.
\begin{equation}\label{Equ:sommepartiel}\sum_{i=1}^{r}\max(\widehat{\mu}_i(\overline V),a)=\sup_{t\in[0,r]}\Big(\widetilde P_{\overline V}(t)-at\Big)+ar.\end{equation}
As the function $\widetilde P_{\overline V}(t)-at$ is affine on each segment $[i-1,i]$ ($i\in\{1,\ldots,r\}$) we get 
that 
\[\sup_{t\in[0,r]}\big(\widetilde P_{\overline V}(t)-at\big)=\max_{i\in\{0,\ldots,r\}}\bigg(\sum_{1\leqslant j\leqslant i}\big(\widehat{\mu}_i(\overline V)-a\big)\bigg)=\sum_{i=1}^r\max\big(\widehat{\mu}_i(\overline V)-a,0\big).\]
It follows that 
\[\sup_{t\in[0,r]}\big(\widetilde P_{\overline V}(t)-at\big)+ar=\sum_{i=1}^r\Big(\max\big(\widehat{\mu}_i(\overline V)-a,0\big)+a\Big)=\sum_{i=1}^r\max(\widehat{\mu}_i(\overline V),a).\]
We start by proving the proposition in the special case where $\varphi$ and $\psi$ are Hermitian. There is then a basis
$\boldsymbol{e}=(e_i)_{i=1}^r$ which is orthonormal for $\varphi$ and orthogonal for $\psi$. For any 
$i\in\{1,\ldots,r\}$ set $\lambda_i=\|e_i\|_\psi$. Without loss of generality, we may assume that 
$\lambda_1\geqslant\ldots\geqslant\lambda_r$. We therefore have that 
$\widehat{\mu}_i(\overline V)=\ln(\lambda_i)$ for any 
$i\in\{1,\ldots,r\}$. Let $\psi'$ be the Hermitian norm on $V$ such that
\[\|x_1e_1+\cdots+x_re_r\|^2_{\psi'}=\sum_{i=1}^r x_i^2\max(\lambda_i^2,\mathrm{e}^{2a}).\]
We then have that
\[\|x_1e_1+\cdots+x_re_r\|_{\psi\vee\varphi(a)}^2=\max\bigg(\sum_{i=1}^rx_i^2\lambda_i^2,\sum_{i=1}^rx_i^2\mathrm{e}^{2a}\bigg).\]
and it follows that 
\[\|.\|_{\psi\vee\varphi(a)}\leqslant\|.\|_{\psi'}\leqslant\sqrt{2}\|.\|_{\psi\vee\varphi(a)}.\]
It follows that
\[\widehat{\deg}(V,\varphi,\psi\vee\varphi(a))
\leqslant\widehat{\deg}(V,\varphi,\psi')
\leqslant \frac r2\log(2)+\widehat{\deg}(V,\varphi,\psi\vee\varphi(a)),\]
and hence
\[\bigg|\widehat{\deg}(V,\varphi,\psi\vee\varphi(a))-\sum_{i=1}^r\max(\widehat{\mu}_i(\overline V),a)\bigg|\leqslant \frac r2\log(2).\]
We now deal with the general case. We choose Hermitian norms $\varphi_1$ and $\psi_1$ such that 
$d(\varphi,\varphi_1)\leqslant\frac 12\log(r)$ and $d(\psi,\psi_1)\leqslant\frac 12\log(r)$. Applying the above result
to $(V,\varphi_1,\psi_1)$ we get that 
\[\bigg|\deg(V,\varphi_1,\psi_1\vee\varphi_1(a))-\sum_{i=1}^r\max(\widehat{\mu}_i(V,\varphi_1,\psi_1),a)\bigg|\leqslant\frac r2\log(2).\]
Moreover, we have that
\[d(\psi\vee\varphi(a),\psi_1\vee\varphi_1(a))\leqslant\max\big(d(\varphi,\varphi_1),d(\psi,\psi_1)\big)\]
and hence
\[|\deg(V,\varphi,\psi\vee\varphi(a))-\deg(V,\varphi_1,\psi_1\vee\varphi_1(a))|\leqslant d(\varphi,\varphi_1)r+\max(d(\varphi,\varphi_1),d(\psi,\psi_1))r,\]
which is bounded above by $r\ln(r)$.
Moreover, by \ref{Pro:comparaisondepolygone} we get that
\[\big|\widetilde P_{\overline V}(t)-\widetilde P_{(V,\varphi_1,\psi_1)}(t)\big|
\leqslant \big(d(\varphi,\varphi_1)+d(\psi,\psi_1)\big)t\leqslant t\ln(r)\leqslant r\ln(r),\]
and hence
\[\bigg|\sup_{t\in[0,r]}\Big(\widetilde P_{\overline V}(t)-at\Big)-
\sup_{t\in[0,r]}\Big(\widetilde P_{(V,\varphi_1,\psi_1)}(t)-at\Big)\bigg|\leqslant r\ln(r).\]
By \eqref{Equ:sommepartiel} we get that
\[\bigg|\widehat{\deg}(V,\varphi,\psi\vee\varphi(a))-\sum_{i=1}^r\max(\widehat{\mu}_i(\overline V),a)\bigg|\leqslant 2r\ln(r)+\frac r2\log(2).\]
\end{proof}

\section{The non-Archimedean analogue.}
In this section, we develop an analogue for slopes for finite dimensional vector spaces over a non-archimedean field
$k$ equipped with two ultrametric norms. Let $k$ be a field equipped with a complete non-archimedean absolute value 
function $|.|$.

\subsection{Ultrametric norms.}
Let $V$ be a $k$-vector space of dimension $r$ equipped with a norm $\|.\|$. As $k$ is assumed to be complete the 
topology on  $V$ is induced by any isomorphism $k^r\rightarrow V$. (We refer the reader to 
\cite[I.\S2, $\text{n}^\circ$3]{Bourbaki81} theorem 2 and the remark on page I.15 for a proof of this fact). 
In particular, any subspace of $V$ is closed (cf. {\it loc. cit.} 
corollary 1 of theorem 2).

Let $(V,\|.\|)$ be a finite-dimensional ultra-normed vector space on $k$. A basis $\boldsymbol{e}=(e_i)_{i=1}^r$ of $V$
is said to be \emph{orthogonal} if the following holds :
\[\forall\,(\lambda_1,\ldots,\lambda_r)\in k^r,\quad \|\lambda_1e_1+\cdots+\lambda_re_r\|=\max_{i\in\{1,\ldots,r\}}|\lambda_i|\cdot \|e_i\|.\]  
An ultra-normed vector space does not necessarily have an orthogonal basis, but the following proposition shows that an
asymptotic version of Gram-Schmidt''s algorithm is still valid in this context. 

\begin{prop}\label{Pro:existenceepsorth}
Let $(V,\|.\|)$ be a ultra-normed $k$-vector space of dimension $r\geqslant 1$. Let
\[0=V_0\subsetneq V_1\subsetneq V_2\subsetneq\ldots\subsetneq V_r=V\]
be a full flag of subspaces of $V$.
For any $\varepsilon\in\, ]0,1[\,$ there is a basis $\boldsymbol{e}=(e_i)_{i=1}^r$ compatible with the flag\footnote{We 
say that a basis $\boldsymbol{e}$ is compatible with a full flag 
$0=V_0\subsetneq V_1\subsetneq V_2\subsetneq\ldots\subsetneq 
V_r=V$ if for any $i\in\{1,\ldots,r\}$, we have that $\mathrm{card}(V_i\cap\boldsymbol{e})=i$.} such that
\begin{equation}\label{Equ:epsothogonale}\forall\,(\lambda_1,\ldots,\lambda_r)\in k^r,\quad \|\lambda_1e_1+\cdots+\lambda_re_r\|\geqslant (1-\varepsilon)\max_{i\in\{1,\ldots,r\}}|\lambda_i|\cdot \|e_i\|.\end{equation}
\end{prop}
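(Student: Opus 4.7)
The plan is to proceed by induction on $r = \dim V$, constructing the basis one vector at a time in a non-archimedean analogue of Gram--Schmidt. The base case $r=1$ is immediate, since any nonzero vector in $V_1$ realizes \eqref{Equ:epsothogonale} with equality.

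For the inductive step I would apply the hypothesis to $V_{r-1}$ with its induced flag and a \emph{strictly sharper} parameter $\varepsilon' \in (0,\varepsilon)$, producing vectors $e_1,\ldots,e_{r-1}$ that are $(1-\varepsilon')$-orthogonal in $V_{r-1}$. To construct $e_r$ I would exploit the fact recalled at the beginning of \S3.1 that every subspace of a finite-dimensional ultra-normed space over a complete field is closed. Hence for any $v \in V_r \setminus V_{r-1}$ the distance $d := \operatorname{dist}(v, V_{r-1})$ is strictly positive, so one can pick $w \in V_{r-1}$ with $\|v - w\| \leqslant d/(1-\varepsilon')$ and set $e_r := v - w$. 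By construction $\operatorname{dist}(e_r, V_{r-1}) \geqslant (1-\varepsilon')\|e_r\|$, i.e.\ $e_r$ is $(1-\varepsilon')$-orthogonal to the entire subspace $V_{r-1}$.

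To verify \eqref{Equ:epsothogonale} for $(e_1,\ldots,e_r)$, I would write $u = \lambda_r e_r + w$ with $w := \sum_{i<r} \lambda_i e_i \in V_{r-1}$. The distance property of $e_r$ yields $\|u\| \geqslant (1-\varepsilon')|\lambda_r|\,\|e_r\|$ when $\lambda_r \neq 0$; coupling this with the ultrametric estimate $\|w\| \leqslant \max(\|u\|, |\lambda_r|\,\|e_r\|)$ and the inductive $(1-\varepsilon')$-orthogonality of the $e_i$ for $i<r$ gives, after a short case split on whether $\|u\| \geqslant |\lambda_r|\,\|e_r\|$ or not, the global bound $\|u\| \geqslant (1-\varepsilon')^2 \max_i |\lambda_i|\,\|e_i\|$. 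Taking $\varepsilon' := 1 - \sqrt{1-\varepsilon}$ then delivers the prescribed constant $(1-\varepsilon)$.

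The main obstacle is the bookkeeping of multiplicative constants: both mechanisms at play -- the almost-minimal-norm choice of $e_r$ and the inductive orthogonality of $e_1,\ldots,e_{r-1}$ -- each cost a factor of $(1-\varepsilon')$, and the two losses compound in the subcase where $\|u\| < |\lambda_r|\,\|e_r\|$. One must anticipate this quadratic loss by entering the induction with $\varepsilon' = 1 - \sqrt{1-\varepsilon}$ rather than with $\varepsilon$ itself; this is the only genuinely subtle point of the argument.
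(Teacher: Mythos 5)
Your proof follows the same inductive, non-archimedean Gram--Schmidt strategy as the paper's (subspaces are closed, pick $e_r$ almost realizing the distance to $V_{r-1}$, split on the ultrametric comparison), so there is no new idea here --- but your extra bookkeeping is, in fact, not wasted. The paper applies the induction hypothesis to $V_{r-1}$ with the \emph{same} $\varepsilon$, then disposes of the case $\|\lambda_r e_r\| \geqslant \|w\|$ (with $w = \sum_{i<r}\lambda_i e_i$) by invoking only $\|z\| \geqslant (1-\varepsilon)|\lambda_r|\cdot\|e_r\|$. That handles the index $i=r$, but for $i<r$ the induction only gives $(1-\varepsilon)|\lambda_i|\cdot\|e_i\| \leqslant \|w\| \leqslant |\lambda_r|\cdot\|e_r\|$, so $|\lambda_i|\cdot\|e_i\|$ can exceed $|\lambda_r|\cdot\|e_r\|$ by a factor up to $(1-\varepsilon)^{-1}$, and then the argument yields only $\|z\| \geqslant (1-\varepsilon)^2|\lambda_i|\cdot\|e_i\|$. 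The compounding occurs precisely at the equality $\|\lambda_r e_r\| = \|w\|$, where the ultrametric identity $\|z\|=\max(\|\lambda_r e_r\|,\|w\|)$ is unavailable (in the strict cases the ultrametric equality already gives $\|z\|\geqslant\|w\|$ or $\|z\|=|\lambda_r|\cdot\|e_r\|$ with no loss). Your device of entering the induction with $\varepsilon' = 1-\sqrt{1-\varepsilon}$ is exactly what absorbs this loss, so you have in effect supplied a small correction to the published argument; the theorem's conclusion is of course unaffected since $\varepsilon$ ranges over all of $\left]0,1\right[$. One cosmetic remark: you use $w$ both for the correcting vector defining $e_r = v - w$ and for the partial sum $\sum_{i<r}\lambda_i e_i$; distinct letters would avoid confusion.
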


\begin{proof}
We proceed by induction on $r$, the dimension of $V$. The case $r=1$ is trivial. Assume the proposition holds for all
spaces of dimension $<r$ for some $r\geqslant 2$. Applying the induction hypothesis to $V_{r-1}$ and the flag $0=V_0
\subsetneq V_1\subsetneq\ldots\subsetneq V_{r-1}$ we get a basis $(e_1,\ldots,e_{r-1})$ compatible with the flag such 
that
\begin{equation}\label{Equ:hyprecurence}\forall\,(\lambda_1,\ldots,\lambda_{r-1})\in k^{r-1},\quad\|\lambda_1e_1+\cdots+\lambda_{r-1}e_{r-1}\|\geqslant (1-\varepsilon)\max_{i\in\{1,\ldots,r-1\}}|\lambda_i|\cdot\|e_i\|.\end{equation}
Let $x$ be an element of $V\setminus V_{r-1}$ and let $y$ be a point in $V_{r-1}$ such that
\begin{equation}\label{Equ:distance}
\|x-y\|\leqslant (1-\varepsilon)^{-1}\mathrm{dist}(x,V_{r-1}).
\end{equation}
(The distance between $x$ 
and $V_{r-1}$ is strictly positive because $V_{r-1}$ is closed in $V$.)
We choose $e_r=x-y$. The basis $e_1,\ldots,e_r$ is compatible with the flag $0=V_0\subsetneq V_1\subsetneq \ldots
\subsetneq V_r=V$. Let $(\lambda_1,\ldots,\lambda_r)$ be an element of $k^r$: we wish to find a lower bound for the 
norm of $z=\lambda_1e_1+\cdots+\lambda_re_r$. By \eqref{Equ:distance} we have that 
\[\|z\|\geqslant |\lambda_r|\cdot\mathrm{dist}(x,V_{r-1})\geqslant (1-\varepsilon)|\lambda_r|\cdot\|e_r\|.\] 
This provides our lower bound when $\|\lambda_r e_r\|\geqslant \|\lambda_1e_1+\cdots+\lambda_{r-1}e_{r-1}\|$. If 
$\|\lambda_r e_r\|< \|\lambda_1e_1+\cdots+\lambda_{r-1}e_{r-1}\|$ then we have 
$\|z\|=\|\lambda_1e_1+\cdots+\lambda_{r-1}e_{r-1}\|$ because the norm is ultrametric. By the induction hypothesis 
\eqref{Equ:hyprecurence} we have that $\|z\|\geqslant(1-\varepsilon)|\lambda_i|\cdot\|e_i\|$ for any 
$i\in\{1,\ldots,r-1\}$. This completes the proof of the proposition.
\end{proof}

\begin{rema}
If $k$ is locally compact then the above equation holds for $\varepsilon=0$ since the distance appearing in 
\eqref{Equ:distance} is then attained.
\end{rema}

\begin{defi}
Let $V$ be an ultra-normed $k$-vector space and $\alpha$ a real number in $]0,1]$. We say that a basis 
$\boldsymbol{e}=(e_1,\ldots,e_r)$ of $V$ is $\alpha$-\emph{orthogonal} if for any $(\lambda_1,\ldots,\lambda_r)\in k^r$ we have that
\[\|\lambda_1e_1+\cdots+\lambda_re_r\|\geqslant \alpha\max(|\lambda_1|\cdot\|e_1\|,\ldots,|\lambda_r|\cdot\|e_r\|).\]
By definition, $1$-orthogonality is the same thing as orthogonality.
\end{defi}
For any ultra-normed finite dimensional $k$-vector space $V$ we let $V^\vee=\Hom_k(V,k)$ be its dual space with the
operator norm. This is also a finite-dimensional ultra-normed $k$-vector space.
\begin{prop}\label{Pro:alphaorthogonale}
Let $V$ be a finite-dimensional ultranormed $k$-vector space and let $(e_1,\ldots,e_r)$ be an $\alpha$-orthogonal basis
for $V$ for some $\alpha\in\,]0,1]$. The dual basis $(e_1^\vee,\ldots,e_r^\vee)$ is then $\alpha$-orthogonal.
\end{prop}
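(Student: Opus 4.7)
The plan is to prove this by directly computing, for each $j$, a lower bound for $\|\varphi\|$ where $\varphi = \sum_{j=1}^r \mu_j e_j^\vee$, and then comparing it against $|\mu_j|\cdot\|e_j^\vee\|$ using information extracted from the $\alpha$-orthogonality of $(e_1,\ldots,e_r)$.

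First I would establish an upper bound for each $\|e_j^\vee\|$ in terms of $\|e_j\|$. For any nonzero $x = \sum_i \lambda_i e_i \in V$, one has $e_j^\vee(x) = \lambda_j$, and the $\alpha$-orthogonality of $(e_i)$ yields
\[
\|x\| \;\geqslant\; \alpha\max_i|\lambda_i|\cdot\|e_i\|\;\geqslant\;\alpha\,|\lambda_j|\cdot\|e_j\|,
\]
so $|e_j^\vee(x)|/\|x\|\leqslant 1/(\alpha\|e_j\|)$. Taking the supremum gives $\|e_j^\vee\|\leqslant 1/(\alpha\|e_j\|)$. (Evaluating at $x=e_j$ also yields the trivial lower bound $\|e_j^\vee\|\geqslant 1/\|e_j\|$, though only the upper bound will be used.)

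Next, given an arbitrary $\varphi = \sum_{j=1}^r \mu_j e_j^\vee$, I would simply evaluate $\varphi$ at the basis vector $e_j$ for a fixed index $j$. Since $\varphi(e_j) = \mu_j$, the definition of the operator norm yields
\[
\|\varphi\|\;\geqslant\;\frac{|\varphi(e_j)|}{\|e_j\|}\;=\;\frac{|\mu_j|}{\|e_j\|}.
\]
Combining this with the upper bound $\|e_j^\vee\|\leqslant 1/(\alpha\|e_j\|)$ from the first step gives
\[
\|\varphi\|\;\geqslant\;\frac{|\mu_j|}{\|e_j\|}\;\geqslant\;\alpha\,|\mu_j|\cdot\|e_j^\vee\|.
\]
Taking the maximum over $j\in\{1,\ldots,r\}$ yields the desired $\alpha$-orthogonality inequality for $(e_1^\vee,\ldots,e_r^\vee)$.

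There is no real obstacle here: the argument is a two-line duality computation that never invokes the ultrametric inequality on the dual side, only the $\alpha$-orthogonality hypothesis on $V$ and the definition of the operator norm. The only point requiring a little care is to remember that we compare $\|\varphi\|$ against $|\mu_j|\cdot\|e_j^\vee\|$, not against $|\mu_j|/\|e_j\|$, so the factor $\alpha$ enters through the upper bound on $\|e_j^\vee\|$ rather than through manipulations of $\varphi$ itself.
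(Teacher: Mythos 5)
Your argument is correct and coincides with the paper's own proof: both establish the bound $\|e_j^\vee\|\leqslant\alpha^{-1}\|e_j\|^{-1}$ from the $\alpha$-orthogonality of $(e_i)$, then evaluate an arbitrary dual vector $\sum_j\mu_j e_j^\vee$ at $e_j$ to obtain $\|\varphi\|\geqslant|\mu_j|/\|e_j\|\geqslant\alpha|\mu_j|\cdot\|e_j^\vee\|$.
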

\begin{proof}
Consider an $r$-tuple $(\lambda_1,\ldots,\lambda_r)\in k^r$. We then have that
\[e_i^{\vee}(\lambda_1e_1+\cdots+\lambda_re_r)=\lambda_i.\]
It follows that
\begin{equation}\label{Equ:normeduale}\|e_i^\vee\|=\sup_{(\lambda_1,\ldots,\lambda_r)\neq(0,\ldots,0)}\frac{|\lambda_i|}{\|\lambda_1e_1+\cdots+\lambda_re_r\|}\leqslant \alpha^{-1}\|e_i\|^{-1}\end{equation}
Consider $\xi=\lambda_1e_1^\vee+\cdots+\lambda_re_r^\vee$ in $V^\vee$. As $\xi(e_i)=\lambda_i$ we get that 
\[\|\xi\|\geqslant |\lambda_i|/\|e_i\|\geqslant\alpha |\lambda_i|\cdot\|e_i^{\vee}\|.\]
This completes the proof of the proposition.
\end{proof}
Let $(V_1,\varphi_1)$ and $(V_2,\varphi_2)$ be two ultra-normed finite-dimensional $k$-vector spaces. We can then 
identify $V_1\otimes V_2$ with $\Hom_k(V_1^\vee,V_2)$ and equip it with the operator norm. This ultrametric norm on 
$V_1\otimes V_2$, is called the \emph{tensor} norm and is denoted by $\varphi_1\otimes\varphi_2$. 
We can construct a tensor norm for a tensor product of multiple ultra-normed spaces recursively 
(cf. \cite[remarque 3.8]{Gaudron08}).  
\begin{prop}\label{Pro:alphatenso}
Let $V$ and $W$ be two finite dimensionel ultra-normed $k$-vector spaces and let $\alpha$ be a real number in $]0,1]$. 
If $(s_i)_{i=1}^n$ and $(t_j)_{j=1}^m$ are $\alpha$-orthogonal bases of $V$ and $W$ respectively then 
$(s_i\otimes t_j)_{\begin{subarray}{c}1\leqslant i\leqslant n\\
1\leqslant j\leqslant m\end{subarray}}$ is an $\alpha^2$-orthogonal basis of  $V\otimes W$.
\end{prop}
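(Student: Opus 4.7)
The plan is to exploit the defining identification $V \otimes W = \Hom_k(V^\vee, W)$ and evaluate a general element of $V \otimes W$ at the basis $(s_i^\vee)$ of $V^\vee$. Given an arbitrary tensor $\xi = \sum_{i,j} \lambda_{ij}\, s_i \otimes t_j$, the goal is to bound $\|\xi\|$ from below by $\alpha^2 \max_{i,j} |\lambda_{ij}| \cdot \|s_i \otimes t_j\|$. Under the identification $V \otimes W = \Hom_k(V^\vee, W)$, the elementary tensor $s \otimes t$ acts on $\eta \in V^\vee$ by $\eta \mapsto \eta(s)\, t$, so $\xi(s_i^\vee) = \sum_{j} \lambda_{ij}\, t_j$.

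The first main step is to apply the $\alpha$-orthogonality of the basis $(t_j)$ in $W$ to obtain, for each fixed $i$,
\[\|\xi(s_i^\vee)\| \geq \alpha \max_{j} |\lambda_{ij}| \cdot \|t_j\|.\]
Since $\|\xi\|$ is the operator norm of $\xi \colon V^\vee \to W$, we have $\|\xi\| \geq \|\xi(s_i^\vee)\|/\|s_i^\vee\|$. The second main step is then to control $\|s_i^\vee\|$ from above using Proposition \ref{Pro:alphaorthogonale}, which applied to the $\alpha$-orthogonal basis $(s_i)$ yields (cf. formula \eqref{Equ:normeduale}) the inequality $\|s_i^\vee\| \leq \alpha^{-1} \|s_i\|^{-1}$. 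Combining these two bounds gives
\[\|\xi\| \geq \alpha^2 \max_{j} |\lambda_{ij}| \cdot \|s_i\| \cdot \|t_j\|,\]
and taking the supremum over $i$ produces $\|\xi\| \geq \alpha^2 \max_{i,j} |\lambda_{ij}| \cdot \|s_i\| \cdot \|t_j\|$.

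The last step is to replace the product $\|s_i\| \cdot \|t_j\|$ by the tensor norm $\|s_i \otimes t_j\|$. Directly from the definition of the tensor norm, one computes
\[\|s_i \otimes t_j\| = \sup_{\eta \in V^\vee \setminus \{0\}} \frac{|\eta(s_i)| \cdot \|t_j\|}{\|\eta\|} \leq \|s_i\| \cdot \|t_j\|,\]
the last inequality coming from the very definition of the dual norm. Substituting this into the bound obtained in the previous paragraph yields the desired $\alpha^2$-orthogonality of $(s_i \otimes t_j)$.

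I expect no serious obstacle here; the only subtlety is the non-elementary direction, namely bounding the operator norm $\|\xi\|$ from below, which is the reason one must evaluate $\xi$ at carefully chosen elements of $V^\vee$ (here the dual basis vectors) and use that their norms are controlled by Proposition \ref{Pro:alphaorthogonale}. The loss of a factor $\alpha$ in each of the two applications of $\alpha$-orthogonality (once for $(t_j)$ in $W$ and once implicitly, via the dual basis, for $(s_i)$ in $V$) explains the factor $\alpha^2$ in the conclusion.
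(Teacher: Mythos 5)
Your proof is correct and follows the same route as the paper's: evaluate the tensor, viewed in $\Hom_k(V^\vee,W)$, at the dual basis vectors $s_i^\vee$, use $\alpha$-orthogonality of $(t_j)$ to bound $\|\xi(s_i^\vee)\|$ from below, and then use the estimate $\|s_i^\vee\|\leqslant\alpha^{-1}\|s_i\|^{-1}$ from Proposition~\ref{Pro:alphaorthogonale} (equation~\eqref{Equ:normeduale}). Your final paragraph verifying $\|s_i\otimes t_j\|\leqslant\|s_i\|\cdot\|t_j\|$ makes explicit a point the paper leaves implicit, but it is not a different argument.
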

\begin{proof}
Consider $(a_{ij})_{\begin{subarray}{c}1\leqslant i\leqslant n\\
1\leqslant j\leqslant m\end{subarray}}$ an $n\times m$ matrix with coefficients in $k$. Let $T$ be the tensor 
$\sum_{i=1}^n\sum_{j=1}^m a_{ij}s_i\otimes t_j$. We seek a lower bound for the operator norm of $T$, considered as a 
$k$-linear map from 
$V^\vee$ to $W$. Let $(s_i^\vee)_{i=1}^n$ be the dual basis of $(s_i)_{i=1}^n$. By Proposition \ref{Pro:alphaorthogonale} 
this is a $\alpha$-orthogonal basis of $V^\vee$. For any $i\in\{1,\ldots,n\}$ we have that 
\[\|T(s_i^\vee)\|=\bigg\|\sum_{j=1}^ma_{ij}t_j\bigg\|\geqslant
\alpha\max_{j\in\{1,\ldots,m\}}|a_{ij}|\cdot\|t_j\|.\]
This proves that
\[\|T\|\geqslant \|s_i^\vee\|^{-1}\alpha\max_{j\in\{1,\ldots,m\}}|a_{ij}|\cdot\|t_j\|\geqslant\alpha^2\max_{{j\in\{1,\ldots,m\}}}|a_{ij}|\cdot\|s_i\|\cdot\|t_j\|,\]
where the second inequality follows from \eqref{Equ:normeduale}.
This completes the proof of the proposition.
\end{proof}

For any $k$-vector space $V$ of finite dimension $r$ equipped with an ultrametric norm $\|.\|$ we equip 
$\det(V)=\Lambda^r(V)$ with the quotient norm induced by the canonical map $V^{\otimes r}\rightarrow\Lambda^r(V)$. The 
ultrametric Hadamard inequality implies that for any basis $(e_1,\ldots,e_r)$ of $V$ we have that
\begin{equation}\label{Equ:Hadamard}
\|e_1\wedge\cdots\wedge e_r\|\leqslant \|e_1\otimes\cdots\otimes e_r\|=\prod_{i=1}^r\|e_i\|.
\end{equation}
Equality holds when the basis $(e_1,\ldots,e_r)$ is orthogonal. If the basis is $\alpha$-orthogonal then we have that
\begin{equation}\label{Equ:Hadamardinverse}
\|e_1\wedge\cdots\wedge e_r\|\geqslant \alpha^r\prod_{i=1}^r\|e_i\|.\end{equation}
This follows from\footnote{By induction, Proposition \ref{Pro:alphatenso} implies that 
$(e_{\sigma(1)}\otimes\cdots\otimes e_{\sigma(r)})_{1\leqslant\sigma(1),\ldots,\sigma(r)\leqslant r}$ is an 
$\alpha^r$-basis of $V^{\otimes r}$. If $\xi=\sum_\sigma a_\sigma e_{\sigma(1)}\otimes\cdots\otimes e_{\sigma(r)}$ is
an element of $V^{\otimes n}$ whose image in $\Lambda^rV$ is $e_1\wedge\cdots\wedge e_r$ then 
$\sum_{\sigma\in\mathfrak S_r}a_\sigma(-1)^{\mathrm{sgn}(\sigma)}=1$ where $\mathfrak S_r$ is the $r$-th symmetric 
group. There is at least one $\sigma\in\mathfrak S_r$ such that $|a_\sigma|\geqslant 1 $. It follows that 
$\|\xi\|\geqslant \alpha^r\prod_{i=1}^r\|e_i\|$.}proposition \ref{Pro:alphatenso}.

\subsection{Arakelov degree and the Harder-Narasimhan polygon.}\label{Sec:degrehn}
Let $\mathcal C_k$ be the class of finite dimensional $k$-vector spaces equipped with two ultrametric norms. 
If $\overline V=(V,\|.\|_{\varphi},\|.\|_\psi)$ is an element of $\mathcal C_k$ we let $\widehat{\deg}(\overline V)$ be 
the following number
\[-\ln\|s_1\wedge\cdots\wedge s_r\|_{\varphi}+\ln\|s_1\wedge\cdots\wedge s_r\|_\psi,\]
where $(s_1,\ldots,s_r)$ is an arbitrary $k$-basis of $V$. This construction does not depend on the choice of 
$(s_1,\ldots,s_r)$ by the trivial product formula
\[\forall\,a\in k^{\times},\quad -\ln|a|+\ln|a|=0.\]
If $V$ is non-trivial we let $\widehat{\mu}(\overline V)$ be the quotient $\widehat{\deg}(\overline V)/\rang(V)$.
It follows from Proposition \ref{Pro:existenceepsorth}, Hadamard's inequality \eqref{Equ:Hadamard} and the inverse
Hadamard inequality \eqref{Equ:Hadamardinverse} that for any flag of subspaces of $V$
\[0=V_0\subsetneq V_1\subsetneq \ldots\subsetneq V_n=V\]
we have that
\begin{equation}\label{Equ:additivitedeg}\widehat{\deg}(\overline V)=\sum_{i=1}^n\widehat{\deg}(\overline V_{\!i}/\overline V_{\!i-1}),\end{equation}
using the subquotient norms.

We let $\widetilde P_{\overline V}$ be the function on $[0,r]$ whose graph is the concave 
upper bound of the set of points in 
$\mathbb R^2$ of the form $(\rang(W),\widehat{\deg}(\overline W))$, where $W$ runs over the set of subspaces of $V$. 
This is a concave function which is affine on each piece $[i-1,i]$ ($i\in\{1,\ldots,r\}$). For any $i\in\{1,\ldots,r\}$
we let $\widehat{\mu}_i(\overline V)$ be the slope of this function on $[i-1,i]$. We introduce a normalised version of
$\widetilde P_{\overline V}$ by setting
\[\forall\,t\in[0,1],\quad P_{\overline V}(t)=\frac{1}{\rang(V)}\widetilde P_{\overline V}(t\rang(V)).\]
Let $Z_{\overline V}$ be a random variable whose probability law is given by 
\[\frac{1}{\rang(V)}\sum_{
i=1}^{\rang(V)}\delta_{\widehat{\mu}_i(\overline V)}.\]
The following equality also holds in the non-archimedean case
\begin{equation}
\widehat{\mu}(\overline V)=P_{\overline V}(1)=\mathbb E[Z_{\overline V}].
\end{equation}
The results of \S\ref{Subsec:polygonenorme} -- in particular Proposition \ref{Pro:comparaisonnorme} and Corollary 
\ref{Cor:compraisonpolygone} -- still hold for members of $\mathcal C_k$ (and their proofs are similar). We now 
summarise these properties:
\begin{prop}
Let $(V,\varphi,\psi)$ be an element of $\mathcal C_k$. Let $\varphi'$ and $\psi'$ be two ultrametric norms on $V$.
\begin{enumerate}[(1)]
\item If $\psi'\leqslant\psi$ then $\widetilde P_{(V,\varphi,\psi')}\leqslant\widetilde P_{(V,\varphi,\psi)}$.
\item If $\varphi'\geqslant\varphi$ then $\widetilde P_{(V,\varphi',\psi)}\leqslant \widetilde P_{(V,\varphi',\psi)}$.
\item In general we have that
\[\forall\,t\in[0,\rang(V)],\quad\big|\widetilde P_{(V,\varphi,\psi)}(t)-
\widetilde P_{(V,\varphi',\psi')}(t)\big|\leqslant\big(d(\varphi,\varphi')+
d(\psi,\psi')\big)t.\]
\end{enumerate}
\end{prop}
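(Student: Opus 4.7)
The plan is to directly adapt the proofs of Proposition \ref{Pro:comparaisonnorme} and Corollary \ref{Cor:compraisonpolygone} to the ultrametric setting. The arguments transfer essentially verbatim; the only extra ingredient is to check that the monotonicity of norms passes through the wedge construction used to define $\widehat{\deg}$.

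For (1), I would first verify that $\psi'\leqslant\psi$ on $V$ implies the same inequality on every subspace and on each top exterior power. Indeed, by the recursive operator-norm definition of the tensor norm, $\psi'\leqslant\psi$ on $V$ yields $\psi'{}^{\otimes k}\leqslant\psi^{\otimes k}$ on $V^{\otimes k}$, which then descends to the quotient $\Lambda^kV$. Hence for any subspace $W$ of dimension $k$ with basis $(s_1,\ldots,s_k)$ one has $\ln\|s_1\wedge\cdots\wedge s_k\|_{\psi'}\leqslant\ln\|s_1\wedge\cdots\wedge s_k\|_{\psi}$, so $\widehat{\deg}(W,\varphi,\psi')\leqslant\widehat{\deg}(W,\varphi,\psi)$. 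Since $\widetilde P_{(V,\varphi,\psi)}$ is the concave upper boundary of the set of points $(\rang(W),\widehat{\deg}(W,\varphi,\psi))$, each point $(\rang(W),\widehat{\deg}(W,\varphi,\psi'))$ lies below this graph, giving $\widetilde P_{(V,\varphi,\psi')}\leqslant\widetilde P_{(V,\varphi,\psi)}$. Statement (2) is proved symmetrically: if $\varphi'\geqslant\varphi$, then $\widehat{\deg}(W,\varphi',\psi)\leqslant\widehat{\deg}(W,\varphi,\psi)$ for every subspace $W$, and the same envelope argument applies.

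For (3), I would introduce the rescaled ultrametric norms $\psi_{-}:=\mathrm{e}^{-d(\psi,\psi')}\|.\|_{\psi}$ and $\psi_{+}:=\mathrm{e}^{d(\psi,\psi')}\|.\|_{\psi}$, which satisfy $\psi_{-}\leqslant\psi'\leqslant\psi_{+}$ by the very definition of $d(\psi,\psi')$. The crucial identity is that rescaling the second norm by a positive constant $\mathrm{e}^{a}$ simply translates the polygon, $\widetilde P_{(V,\varphi,\psi(a))}(t)=\widetilde P_{(V,\varphi,\psi)}(t)+at$, which is immediate from the definition of $\widehat{\deg}$. Applying (1) to the sandwich $\psi_{-}\leqslant\psi'\leqslant\psi_{+}$ then yields $|\widetilde P_{(V,\varphi,\psi)}(t)-\widetilde P_{(V,\varphi,\psi')}(t)|\leqslant d(\psi,\psi')\,t$. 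An analogous argument using (2) bounds $|\widetilde P_{(V,\varphi,\psi')}(t)-\widetilde P_{(V,\varphi',\psi')}(t)|$ by $d(\varphi,\varphi')\,t$, and the triangle inequality delivers the statement.

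The only genuine point of care is in the first step, namely confirming that the monotonicity $\psi'\leqslant\psi$ persists after passage to the wedge-product norm used in $\widehat{\deg}$, because in the non-archimedean setting a priori one could fear more delicate behaviour under tensor and quotient operations. However, the recursive operator-norm definition of the tensor norm adopted earlier in \S3.1 renders this passage entirely mechanical, so no serious obstacle arises and the argument is formally parallel to the complex case.
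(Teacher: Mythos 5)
Your argument is correct and follows exactly the route the paper has in mind: the paper simply states the ultrametric analogue with the remark that ``the proofs are similar,'' and you have correctly identified and resolved the one genuinely new step, namely that $\psi'\leqslant\psi$ persists through the operator-norm tensor construction (the dual norm reverses the inequality, so the operator norm preserves it) and then through the quotient to $\Lambda^kV$, giving $\widehat{\deg}(W,\varphi,\psi')\leqslant\widehat{\deg}(W,\varphi,\psi)$ for every $W$. Parts (2) and (3) then follow by the same envelope and rescaling arguments as in \S\ref{Subsec:polygonenorme}, as you say.
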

The following proposition can be seen as an ultrametric analogue of the Courant-Fischer theorem.
\begin{prop}\label{Pro:courantfischerultrametrique}
Let $\overline V=(V,\varphi,\psi)$ be an element of $\mathcal C_k$ of dimension $r\geqslant 1$. For any 
$i\in\{1,\ldots,r\}$ we have that
\begin{equation}
\widetilde P_{\overline V}(i)=\sup_{\begin{subarray}{c}
W\subset V\\
\rang(W)=i
\end{subarray}}\widehat{\deg}(\overline W).
\end{equation}
\end{prop}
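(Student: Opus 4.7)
My plan starts with the easy inequality. The bound $\widetilde P_{\overline V}(i)\geqslant\sup_{\rang(W)=i}\widehat{\deg}(\overline W)$ is immediate from the definition of $\widetilde P_{\overline V}$ as the upper boundary of the convex closure of the points $(\rang(W),\widehat{\deg}(\overline W))$: every such point lies on or below this graph.

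For the reverse inequality, I would exhibit, for any $\varepsilon>0$ and any $i\in\{1,\ldots,r\}$, a subspace $W\subseteq V$ of rank $i$ with $\widehat{\deg}(\overline W)\geqslant\widetilde P_{\overline V}(i)-\varepsilon$. The key ingredients are the additivity formula \eqref{Equ:additivitedeg} (which in the ultrametric setting holds along any flag and plays the role of a degree function on a filtered category) together with Proposition \ref{Pro:existenceepsorth}, which supplies $\alpha$-orthogonal bases with $\alpha$ arbitrarily close to $1$.

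The strategy is a greedy construction of an approximate Harder-Narasimhan filtration. Starting from $W_0=0$, having chosen $W_j\subsetneq V$, set $\mu_j:=\sup_{0\neq U\subseteq V/W_j}\widehat{\mu}(\overline U)$ (computed with the quotient norms) and pick $\overline{U_j}\subseteq\overline{V/W_j}$ with $\widehat{\mu}(\overline{U_j})\geqslant\mu_j-\varepsilon/r$ of maximal rank among such approximate maximizers (this maximal rank exists since the rank is a bounded nonnegative integer). Let $W_{j+1}$ be the preimage of $U_j$ in $V$. The maximality of the rank forces the sequence $(\mu_j)$ to be essentially non-increasing, and additivity along the flag yields
\[\widehat{\deg}(\overline{W_j})\;\approx\;\sum_{l<j}\rang(U_l)\,\mu_l\]
up to an error of order $\varepsilon$. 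The resulting partial-degree sequence is therefore approximately concave and matches $\widetilde P_{\overline V}$ at the breakpoints $\rang(W_j)$ up to $O(\varepsilon)$. To handle an arbitrary $i$, one locates the index $j^*$ with $\rang(W_{j^*})\leqslant i<\rang(W_{j^*+1})$ and applies Proposition \ref{Pro:existenceepsorth} inside the approximately semistable layer $\overline{U_{j^*}}$ to extract a sub-flag of the required rank; the Hadamard inequalities \eqref{Equ:Hadamard}--\eqref{Equ:Hadamardinverse} then ensure that the corresponding partial degree matches the linear interpolation of $\widetilde P_{\overline V}$ on this layer, up to $O(\varepsilon)$.

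The main obstacle is the possible non-attainment of the supremum defining $\mu_j$ when $k$ is not locally compact, which is why the construction uses $\varepsilon/r$-approximate maximizers rather than exact ones. A secondary difficulty is showing that an approximately semistable layer admits sub-flags whose partial-degree sequences are almost linear between the endpoints; this is where the $\alpha$-orthogonal bases of Proposition \ref{Pro:existenceepsorth} are decisive, replacing the simultaneous orthonormal diagonalization that trivialises the Hermitian case but is unavailable in the ultrametric setting.
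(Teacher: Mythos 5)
Your proposed approach is genuinely different from the paper's, and it has two gaps that I do not think can be filled with the tools you invoke.

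\textbf{First gap (intermediate ranks).} You handle ranks strictly between two greedy breakpoints by applying Proposition~\ref{Pro:existenceepsorth} inside a layer $\overline{U_{j^*}}$ and then invoking Hadamard. But Proposition~\ref{Pro:existenceepsorth} produces an $\alpha$-orthogonal basis with respect to a \emph{single} ultrametric norm, and the inequalities \eqref{Equ:Hadamard}--\eqref{Equ:Hadamardinverse} likewise control $\|e_1\wedge\cdots\wedge e_m\|$ for one norm only. The degree $\widehat{\deg}(\overline{W'})=-\ln\|e_1\wedge\cdots\wedge e_m\|_{\varphi}+\ln\|e_1\wedge\cdots\wedge e_m\|_{\psi}$ involves both norms; controlling the wedge for $\varphi$ tells you nothing about its $\psi$-norm, so you cannot conclude that the subspace you extract has the right slope. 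What is actually needed here is a basis of $U_{j^*}$ that is $\alpha$-orthogonal \emph{simultaneously} for $\varphi$ and $\psi$; this is a strictly stronger statement which does not follow from Proposition~\ref{Pro:existenceepsorth}, and it is precisely what the paper proves first (by a greedy construction at the level of vectors rather than subspaces, picking $e_1$ to nearly minimize $\|x\|_\psi/\|x\|_\varphi$ and exploiting ultrametricity to make $\varphi$-orthogonality propagate to $\psi$-orthogonality). The circularity is already visible at $i=1$: the greedy step may return $U_0$ of rank $>1$, and to conclude you would then need to produce a rank-$1$ subspace of $U_0$ of slope close to $\mu_0=\widehat\mu(\overline{U_0})$, which is exactly the content of the proposition on $U_0$.

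\textbf{Second gap (breakpoints).} You assert that the partial-degree sequence of the greedy flag ``matches $\widetilde P_{\overline V}$ at the breakpoints up to $O(\varepsilon)$,'' but only the inequality $\widehat{\deg}(\overline{W_j})\leqslant\widetilde P_{\overline V}(\rang W_j)$ is automatic. To get the reverse bound you must show that \emph{every} subspace $W$ has $\widehat{\deg}(\overline W)$ below the piecewise-linear function through the greedy points. The natural attempt is to filter $W$ by $W\cap W_j$, use additivity \eqref{Equ:additivitedeg}, and bound each $\widehat{\deg}(\overline{W\cap W_j}/\overline{W\cap W_{j-1}})$ by $\mu_{j-1}\cdot\rang(W\cap W_j/W\cap W_{j-1})$. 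But the additivity formula uses the subquotient-of-$W$ norm ($\inf$ over $W\cap W_{j-1}$), while the slope bound $\leqslant\mu_{j-1}$ applies to the subspace-of-$V/W_{j-1}$ norm ($\inf$ over the larger set $W_{j-1}$). These two norms on $W\cap W_j/W\cap W_{j-1}$ are both at most/at least the other in the same direction for $\varphi$ and for $\psi$; since $\widehat{\deg}$ is $-\ln\|\cdot\|_\varphi+\ln\|\cdot\|_\psi$, the two contributions have opposite signs and the resulting inequality on $\widehat{\deg}$ goes in no useful direction. So the claim that your greedy polygon coincides with $\widetilde P_{\overline V}$ is not established.

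The paper avoids both obstacles by first proving the simultaneous $\alpha$-orthogonalization (greedy at the vector level, using ultrametricity of $\psi$ together with a near-minimal slope choice for $e_1$), then computing the polygon explicitly in the orthogonal case via Hadamard, and finally passing to the limit with the Lipschitz estimates of Corollary~\ref{Cor:compraisonpolygone}. If you want to salvage your subspace-level greedy construction, you would still need the simultaneous $\alpha$-orthogonal basis as a lemma, at which point the paper's direct route becomes shorter.
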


\begin{proof}
We prove by induction on $r$ that for any $\alpha\in\,]0,1[\,$ there is a basis in $V$ which is $\alpha$-orthogonal for
both $\varphi$ and $\psi$. The case $r=1$ is trivial. Suppose that this statement has been proved for all elements of 
$\mathcal C_k$ of dimension $<r$. We choose $e_1\in V\setminus\{0\}$ such that 
\begin{equation}\label{Equ:premier}\frac{\|e_1\|_{\psi}}{\|e_1\|_{\varphi}}\leqslant (\sqrt[4]{\alpha})^{-1}\inf_{0\neq x\in V}\frac{\|x\|_{\psi}}{\|x\|_{\varphi}}.\end{equation}
By Proposition \ref{Pro:existenceepsorth} there is a subspace $W\subset V$ of dimension $r-1$ such that
\begin{equation}\label{Equ:minorationvarphi}
\forall\,y\in W,\quad\|e_1+y\|_{\varphi}\geqslant\sqrt[4]{\alpha}\max(\|e_1\|_\varphi,\|y\|_{\varphi}).\end{equation}
By \eqref{Equ:premier} and \eqref{Equ:minorationvarphi} we have that
\[\forall\,y\in W,\quad\|e_1+y\|_{\psi}\geqslant\sqrt[4]{\alpha}\cdot\|e_1+y\|_{\varphi}\cdot\frac{\|e_1\|_{\psi}}{\|e_1\|_{\varphi}}\geqslant\sqrt{\alpha}\cdot\|e_1\|_{\psi}.\]
Moreover, as $\psi$ is ultrametric, if $\|y\|_{\psi}>\|e_1\|_{\psi}$ then $\|e_1+y\|_\psi=\|y\|_\psi$. It follows that
\begin{equation}\label{Equ:minorationpsi}\|e_1+y\|_{\psi}\geqslant\sqrt{\alpha}\max(\|e_1\|_\psi,\|y\|_\psi).\end{equation}
By the induction hypothesis, there is a basis $(e_2,\ldots,e_r)$ of $W$ which is $\sqrt{\alpha}$-orthogonal for both 
$\varphi$ and $\psi$. Let us prove that $(e_1,\ldots,e_r)$ is $\alpha$-orthogonal for both $\varphi$ and $\psi$. Let 
$(\lambda_1,\ldots,\lambda_r)$ be an element of $k^r$ with $\lambda_1\neq 0$. By \eqref{Equ:minorationvarphi} we have
that
\[\begin{split}&\quad\;\|\lambda_1e_1+\cdots+\lambda_re_r\|_\varphi\geqslant\sqrt[4]{\alpha}\cdot
\max(|\lambda_1|\cdot\|e_1\|_{\varphi},\|\lambda_2e_2+\cdots+\lambda_re_r\|_{\varphi})\\
&\geqslant \sqrt[4]{\alpha}\cdot
\max(|\lambda_1|\cdot\|e_1\|_{\varphi},\sqrt{\alpha}\cdot\max(|\lambda_2|\cdot\|e_2\|_{\varphi},\cdots,|\lambda_r|\cdot\|e_r\|_{\varphi}))\\
&\geqslant \alpha\max(|\lambda_1|\cdot\|e_1\|_{\varphi},\ldots,|\lambda_r|\cdot\|e_r\|_{\varphi}),
\end{split}\]
where the second inequality comes from the fact that $(e_2,\ldots,e_r)$ is $\sqrt{\alpha}$-orthogonal for $W$ for the 
norm $\|.\|_{\varphi}$. This inequality also holds when $\lambda_1=0$ (here we use directly the fact that 
$(e_2,\ldots,e_r)$ is an $\sqrt{\alpha}$-orthogonal basis).
Similarly, it follows from \eqref{Equ:minorationpsi} that $(e_1,\ldots,e_r)$ is an $\alpha$-orthogonal basis for $V$ 
with respect to $\|.\|_{\psi}$.

We now prove the proposition. We deal first with the case where there is a basis $(s_1,\ldots,s_r)$ which is 
orthogonal for both $\varphi$ and $\psi$. For any $i\in\{1,\ldots,r\}$ let $a_i$ be the logarithm of the ratio 
$\|s_i\|_{\psi}/\|s_i\|_{\varphi}$. Without loss of generality we may assume that 
$a_1\geqslant a_2\geqslant\ldots\geqslant a_r$.  For any integer $m\in\{1,\ldots,r\}$ the vectors 
$s_{i_1}\wedge\cdots\wedge s_{i_m}$ ($1\leqslant i_1<\ldots<i_m\leqslant r$) form a basis for $\Lambda^m(V)$ which is 
orthogonal for both $\varphi$ abd $\psi$. For any $m$-dimensional subspace $W\subset V$, writing a non-zero element
of  $\Lambda^mW$ as a sum of elements of the form $s_{i_1}\wedge\cdots\wedge s_{i_m}$ enables us to prove that 
\[\widehat{\deg}(\overline W)\leqslant a_1+\cdots+a_m,\]
and equality is achieved when $W$ is generated by the $s_1,\ldots,s_m$.

In the general case, the above proposition enables us to construct a sequence of objects $(V,\varphi_n,\psi_n)$ 
($n\in\mathbb N$) in $\mathcal C_k$ such that\footnote{As in the complex case, for any pair $(\eta,\eta')$ of 
ultrametric norms on $V$ we set
\[d(\eta,\eta')=\sup_{0\neq x\in V}\Big|\ln\|x\|_{\eta}-\ln\|x\|_{\eta'}\Big|.\]} 
\[\lim_{n\rightarrow+\infty}d(\varphi_n,\varphi)+d(\psi_n,\psi)=0\]
and for any $n$ there is a basis of $V$ which is orthogonal for both $\varphi_n$ and $\psi_n$. On the one hand, 
\[\widetilde P_{(V,\varphi_n,\psi_n)}(i)=\sup_{\begin{subarray}{c}W\subset V\\\rang(W)=i\end{subarray}}
\widehat{\deg}(W,\varphi_n,\psi_n),\]
and on the other, we have that
\[\big|\widetilde P_{(V,\varphi_n,\psi_n)}(t)-\widetilde P_{(V,\varphi,\psi)}(t)\big|\leqslant (d(\varphi_n,\varphi)+d(\psi_n,\psi))t,\]
and for any subspace $W\subset V$ we have that 
\[\big|\widehat{\deg}(W,\varphi_n,\psi_n)-\widehat{\deg}(W,\varphi,\psi)\big|\leqslant(d(\varphi_n,\varphi)+d(\psi_n,\psi))\rang(W).\]
As $n\rightarrow+\infty$ we obtain the desired result.
\end{proof}
\subsection{Truncation}\label{Subsec:troncaturepadique} The results in \S\ref{SubSec:Troncature} are still valid
for elements of
$\mathcal C_k$. The proof is simpler because we only consider ultrametric norms. Let $V$ be a finite dimensional 
$k$-vector space and let $\varphi$ be an ultrametric norm on $V$. For any real number $a$ let $\varphi(a)$ be the norm
on $V$ such that
\[\forall\,x\in V,\quad \|x\|_{\varphi(a)}=\mathrm{e}^a\|x\|_{\varphi}.\]
If $\varphi$ and $\psi$ are two ultrametric norms on $V$ we let $\varphi\vee\psi$ be the norm on $V$ such that
\[\forall\,x\in V,\quad \|x\|_{\varphi\vee\psi}=\max(\|x\|_{\varphi},\|x\|_\psi).\]
This is also an ultrametric norm on $V$.
\begin{prop}\label{Pro:troncaturepadique}
Let $\overline V=(V,\varphi,\psi)$ be a non-trivial element of $\mathcal C_k$ and let $a$ be a real number. We have 
that
\[\widehat{\deg}(V,\varphi,\psi\vee\varphi(a))=\sum_{i=1}^{\rang(V)}\max(\widehat{\mu}_i(\overline V),a)\]
\end{prop}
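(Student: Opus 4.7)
First I would handle the special case where $\overline V$ admits a common orthogonal basis for both $\varphi$ and $\psi$, by a direct calculation. Assume there is a basis $\boldsymbol{e}=(e_1,\ldots,e_r)$ of $V$ orthogonal for both $\varphi$ and $\psi$. Writing $x=\sum_i\lambda_ie_i$, the trivial identity $\max(\max_ix_i,\max_iy_i)=\max_i\max(x_i,y_i)$ gives
\[\|x\|_{\psi\vee\varphi(a)}=\max\Bigl(\max_i|\lambda_i|\cdot\|e_i\|_\psi,\ \max_i|\lambda_i|\cdot e^a\|e_i\|_\varphi\Bigr)=\max_i|\lambda_i|\cdot\max\bigl(\|e_i\|_\psi,\,e^a\|e_i\|_\varphi\bigr).\]
Thus $\boldsymbol e$ is automatically orthogonal for $\psi\vee\varphi(a)$, with $\|e_i\|_{\psi\vee\varphi(a)}=\max(\|e_i\|_\psi,e^a\|e_i\|_\varphi)$. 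Since orthogonality turns Hadamard's inequality \eqref{Equ:Hadamard} into an equality for both $\varphi$ and $\psi\vee\varphi(a)$, the Arakelov degree splits as
\[\widehat{\deg}(V,\varphi,\psi\vee\varphi(a))=\sum_{i=1}^r\bigl(\ln\|e_i\|_{\psi\vee\varphi(a)}-\ln\|e_i\|_\varphi\bigr)=\sum_{i=1}^r\max\bigl(\ln(\|e_i\|_\psi/\|e_i\|_\varphi),\,a\bigr).\]

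Next I would identify the real numbers $a_i:=\ln(\|e_i\|_\psi/\|e_i\|_\varphi)$, sorted in decreasing order, with the intermediate slopes $\widehat\mu_i(\overline V)$. This is precisely what is established inside the proof of Proposition \ref{Pro:courantfischerultrametrique}: when a common orthogonal basis exists, the wedge products $e_{i_1}\wedge\cdots\wedge e_{i_m}$ form an orthogonal basis of $\Lambda^mV$ for both norms, which forces $\widetilde P_{\overline V}(m)=a_1+\cdots+a_m$ and hence $a_i=\widehat\mu_i(\overline V)$. This settles the special case.

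For the general case I would invoke the approximation constructed at the end of that same proof: a sequence of pairs $(\varphi_n,\psi_n)$ on $V$, each admitting a common orthogonal basis, with $d(\varphi_n,\varphi)+d(\psi_n,\psi)\to 0$. The special case yields equality for every $n$, and both sides depend continuously on the perturbed data: on the left via the trivial bound $d(\psi_n\vee\varphi_n(a),\psi\vee\varphi(a))\leqslant\max(d(\varphi_n,\varphi),d(\psi_n,\psi))$ together with the Lipschitz dependence of $\widehat{\deg}$ on each norm, and on the right via Corollary \ref{Cor:compraisonpolygone} (valid also in the ultrametric setting), which forces uniform convergence of the polygons $\widetilde P_{(V,\varphi_n,\psi_n)}$ and therefore of their slopes. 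Passing to the limit gives the desired equality.

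The main obstacle is conceptually just the observation that the ultrametric $\max$ commutes with the coordinate-wise $\max$, so a basis orthogonal for both $\varphi$ and $\psi$ is automatically orthogonal for $\psi\vee\varphi(a)$ with no numerical loss. This is what permits an exact formula here, in sharp contrast with the $\sqrt 2$-error appearing in Proposition \ref{Pro:troncatureC}, where $\sum_i\max(\lambda_i^2,e^{2a})|x_i|^2$ and $\max\bigl(\sum_i\lambda_i^2|x_i|^2,\sum_i e^{2a}|x_i|^2\bigr)$ differ multiplicatively by a factor of at most $\sqrt 2$.
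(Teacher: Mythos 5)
Your proof is correct and follows the same two-step strategy as the paper's own argument: first settle the case of a common orthogonal basis by observing that such a basis is automatically orthogonal for $\psi\vee\varphi(a)$, then pass to the general case by approximation as in the proof of Proposition \ref{Pro:courantfischerultrametrique}. You merely supply more detail than the paper's terse write-up, in particular the explicit identification of the sorted ratios $\ln(\|e_i\|_\psi/\|e_i\|_\varphi)$ with the slopes $\widehat\mu_i(\overline V)$ and the Lipschitz bounds justifying the passage to the limit on both sides.
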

\begin{proof}
We start with the case where $V$ has a basis $(e_1,\ldots,e_r)$ which is orthogonal for both $\varphi$ and $\psi$. Without loss
of generality, we have that
\[\ln\frac{\|e_i\|_\psi}{\|e_i\|_{\varphi}}=\widehat{\mu}_i(\overline V).\] 
The basis $(e_1,\ldots,e_r)$ is also orthogonal for the norm $\psi\vee\varphi(a)$ and we have that
\[\ln\frac{\|e_i\|_\psi}{\|e_i\|_{\varphi}}=\max(\widehat{\mu}_i(\overline V),a).\]
The result follows. In general, we can approximate $(\varphi,\psi)$ by pairs of norms $((\varphi_n,\psi_n))_{n\geqslant 1}$ such that for every $n$ the vector space $V$ has a basis orthogonal for both $\varphi_n$ and $\psi_n$ (cf. the proof of Proposition \ref{Pro:courantfischerultrametrique}). Passing to the limit $n\rightarrow+\infty$, we get our result.
\end{proof}

\section{Equilibrium energy.}\label{Sec:energiealequilibre}
In this section we fix a field $k$, which is either $\mathbb C$ with the usual absolute value, or a 
complete field equipped with a non-archimedean absolute value. If $k=\mathbb C$ then we will denote by 
$\mathcal C_k$ the class $\mathcal C$ defined in \S\ref{Subsec:polygonenorme}.
\subsection{Monomial bases and the Okounkov semi-group.} In this subsection we recall the construction of
the Okounkov semi-group of a graded linear series. We refer the reader to \cite{Okounkov96,Lazarsfeld_Mustata08,
Boucksom_Bourbaki, Kaveh_Khovanskii} for more details. 

Consider an integral projective scheme $X$ of dimension $d\geqslant 1$ defined over the field $k$. We assume that the
scheme $X$ has a regular rational point $x$: the local ring $\mathcal O_{X,x}$ is then a regular local ring of 
dimension $d$. We fix a regular sequence $(z_1,\ldots,z_d)$ in its maximal ideal $\mathfrak m_x$. The formal completion
of $\mathcal O_{X,x}$ with respect to the maximal ideal $\mathfrak m_x$ is isomorphic to the algebra of formal series
in the parameters $z_1,\ldots,z_d$ (cf. \cite[proposition 10.16]{Eise}).

If we choose a monomial ordering\footnote{This is a total order $\leqslant$ on $\mathbb N^d$ such that 
$0\leqslant\alpha$ for any $\alpha\in\mathbb N^d$ and $\alpha\leqslant\alpha'$ implies $\alpha+\beta\leqslant\alpha'+\beta$ for any $\alpha,\alpha'$ and $\beta$ in $\mathbb N^d$.} $\leqslant$ on $\mathbb N^d$ we obtain a decreasing
$\mathbb N^d$-filtration (called the \emph{Okounkov filtration}) $\mathcal F$ on $\widehat{\mathcal O}_{X,x}$ such that
$\mathcal F^{\alpha}(\widehat{\mathcal O}_{X,x})$ is the ideal generated by monomials of the form $z^{\beta}$ such that 
$\beta\geqslant\alpha$. This filtration is multiplicative: we have that
\[\mathcal F^\alpha(\widehat{\mathcal O}_{X,x})\mathcal F^\beta(\widehat{\mathcal O}_{X,x})\subset
\mathcal F^{\alpha+\beta}(\widehat{\mathcal O}_{X,x}).\]

The filtration $\mathcal F$ induces by grading an $\mathbb N^d$-graded algebra $\mathrm{gr}(\mathcal O_{X,x})$ which is 
isomorphic\footnote{This follows from the fact that $\mathcal O_{X,x}$ is dense in $\widehat{\mathcal O}_{X,x}$.} to
 $k[z_1,\ldots,z_d]$. In particular, for any $\alpha\in\mathbb N^d$, $\mathrm{gr}^{\alpha}(\mathcal O_{X,x})$ is a 
rank-one vector space on $k$.

If $L$ is an inversible $\mathcal O_X$-module then on taking a local trivialisation in a neighbourhood of $x$ we can identify $L_x$
with $\mathcal O_{X,x}$. The filtration $\mathcal F$ then induces a decreasing $\mathbb N^d$-filtration 
on $H^0(X,L)$ which is independent of the choice of trivialisation. For any $s\in H^0(X,L)$ we denote by 
$\mathrm{ord}(s)$
the upper bound of the set of $\alpha\in\mathbb N^d$ such that $s\in\mathcal F^\alpha H^0(X,L)$. We have that 
\[\forall\,s,s'\in H^0(X,L),\quad\mathrm{ord}(s+s')\geqslant\min\big(\mathrm{ord}(s),\mathrm{ord}(s')\big).\]
Moreover, for any $s\in H^0(X,L)$ and any $a\in k^{\times}$ we have that $\mathrm{ord}(s)=\mathrm{ord}(as)$.

Let $L$ be an invertible $\mathcal O_X$-module. We let $V_\sbullet(L)$ be the graded ring $\bigoplus_{n\geqslant 0}
H^0(X,nL)$ (with the additive notation for the tensor product of invertible sheaves). By a  \emph{graded linear system}
of $L$ we mean a graded subalgebra of $V_\sbullet(L)$. Any graded linear system  $V_\sbullet$ of $L$ can be identified, on choosing 
a local trivialisation of $L$ around $x$, with a graded subalgebra of the algebra of 
polynomials $\mathcal O_{X,x}[T]$. The filtration $\mathcal F$ induces a decreasing $\mathbb N^d$-filtration on each 
homogeneous piece $V_n$. We denote by $\mathrm{gr}(V_\sbullet)$ the $\mathbb N^{d+1}$-graded $k$-algebra induced by this
filtration. This is an $\mathbb N^{d+1}$-graded subalgebra of 
$\mathrm{gr}(\mathcal O_{X,x})[T]\cong k[z_1,\ldots,z_d,T]$. In particular, the elements $(n,\alpha)\in\mathbb N^{d+1}$ 
such that $\mathrm{gr}^{(n,\alpha)}(V_\sbullet)\neq \{0\}$ form a sub-semigroup of $\mathbb N^{d+1}$ which we denote by 
$\Gamma(V_{\sbullet})$. For any $n\in\mathbb N$ we denote by $\Gamma(V_n)$ the subset of $\mathbb N^d$ of elements 
$\alpha$ such that $(n,\alpha)\in\Gamma(V_\sbullet)$.
\subsection{Monomial norms.}\label{SubSec:normesmonomiales}
As above, we consider an integral projective scheme $X$ of dimension $d\geqslant 1$ over $\Spec k$. We fix a regular 
rational point $x\in X(k)$ (it is assumed that such a point exists), a system of parameters $z=(z_1,\ldots,z_d)$ at $x$ and
a monomial order on $\mathbb N^d$. Let $L$ be an invertible $\mathcal O_X$-module and let $V_\sbullet$ be a graded linear
system of $L$. We assume that every $k$ vector space $V_n$ is equipped with two norms $\varphi_n$ and $\psi_n$, which 
are ultrametric if $k$ is non-archimedean. We assume moreover that these norms are submultiplicative - ie. that for any
$(n,m)\in\mathbb N^2$ and any $(s_n,s_m)\in V_n\times V_m$ we have that
\begin{equation}\label{Equ:sousmultiplicative}\|s_n\otimes s_m\|_{\varphi_{n+m}}\leqslant\|s_n\|_{\varphi_n}\cdot\|s_m\|_{\varphi_m},\quad
\|s_n\otimes s_m\|_{\psi_{n+m}}\leqslant\|s_n\|_{\psi_n}\cdot\|s_m\|_{\psi_m}.\end{equation}
In this subsection, we study the asymptotic behaviour of $\widehat{\deg}(V_n,\varphi_n,\psi_n)$. As the 
Harder-Narasimhan filtration is not functorial in $\mathcal C_k$ we cannot study this problem directly using the method
developped in \cite{Boucksom_Chen}. We will avoid this problem by using Okounkov filtrations. The norms $\varphi_n$ and
$\psi_n$ induce quotient norms on each of the sub-quotients $\mathrm{gr}^\alpha(V_n)$ ($\alpha\in\Gamma(V_n)$) which by
abuse of notation we will continue to denote by $\varphi_n$ and $\psi_n$. By the results of previous sections, notably 
\eqref{Equ:sommesousquotient} and \eqref{Equ:additivitedeg}, we have that
\begin{equation*}\bigg|\widehat{\deg}(V_n,\varphi_n,\psi_n)-\sum_{\alpha\in\Gamma(V_n)}\widehat{\deg}(\mathrm{gr}^\alpha(V_n),\varphi_n,\psi_n)\bigg|\leqslant A_k(\rang(V_n)),\end{equation*}
where $A_k(r)=r\ln(r)$ if $k=\mathbb C$ and $A_k(r)=0$ if $k$ is non-archimedean. We deduce that 
\begin{equation}\label{Equ:remenerauquotient}
\lim_{n\rightarrow+\infty}\bigg|\frac{\widehat{\mu}(V_n,\varphi_n,\psi_n)}{n}-\frac{1}{n\#\Gamma(V_n)}\sum_{\alpha\in\Gamma(V_n)}
\widehat{\deg}(\mathrm{gr}^\alpha(V_n),\varphi_n,\psi_n)\bigg|=0.
\end{equation}
Moreover, if we equip the space
\[\mathrm{gr}(V_n)=\bigoplus_{\alpha\in\Gamma(V_n)}\mathrm{gr}^\alpha(V_n)\]
with norms $\hat{\varphi}_n$ and $\hat{\psi}_n$ such that the rank $1$ subspaces $\mathrm{gr}^\alpha(V_n)$ are 
orthogonal with the induced (subquotient) norms $\varphi_n$ and $\psi_n$ respectively then we have that
\[\sum_{\alpha\in\Gamma(V_n)}\widehat{\deg}(\mathrm{gr}^\alpha(V_n),\varphi_n,\psi_n)=\widehat{\deg}(\mathrm{gr}(V_n),\hat{\varphi}_n,\hat{\psi}_n).\]
The problem then reduces to the study of the $k$-algebra of the semi-group $\Gamma(V_\sbullet)$ (isomorphic to 
$\mathrm{gr}(V_\sbullet)$) with the appropriate norms. As the semi-group $\Gamma(V_\sbullet)$ is a multiplicative basis
for the algebra $k[\Gamma(V_\sbullet)]$ we can construct a new norm $\eta_n$ on each space $\mathrm{gr}(V_n)$ which is
multiplicative: for any $\gamma\in\Gamma(V_\sbullet)$ we let $s_{\gamma}$ be the canonical image of 
$\gamma\in\Gamma(V_\sbullet)$ in the algebra $k[\Gamma(V_\sbullet)]$ and we equip 
\[\mathrm{gr}(V_n)=\bigoplus_{\alpha\in\Gamma(V_n)}k s_{(n,\alpha)}\]
with the norm $\eta_n$ such that the vectors $s_{n,\alpha}$ are orthogonal of norm $1$. Using these auxillary norms we 
can write the degree  $\widehat{\deg}(\mathrm{gr}(V_n),\hat{\varphi}_n,\hat{\psi}_n)$ as a difference
\[\widehat{\deg}(\mathrm{gr}(V_n),\hat{\varphi}_n,\eta_n)-\widehat{\deg}(\mathrm{gr}(V_n),\hat{\psi}_n,\eta_n),\]
or alternatively
\[\sum_{\alpha\in\Gamma(V_n)}\widehat{\deg}(\mathrm{gr}^\alpha(V_n),\varphi_n,\eta_n)-\sum_{\alpha\in\Gamma(V_n)}\widehat{\deg}(\mathrm{gr}^\alpha(V_n),\psi_n,\eta_n).\]
It is easy to see that the real valued functions $(n,\alpha)\mapsto\widehat{\deg}(\mathrm{gr}^\alpha(V_n),\varphi_n,
\eta_n)$ and $(n,\alpha)\mapsto\widehat{\deg}(\mathrm{gr}^\alpha(V_n),\varphi_n,\eta_n)$ defined on $\Gamma(V_\sbullet)$  
are superadditive, so their asymptotic behaviour can be studied using the methods developped in \cite{Boucksom_Chen}.
\subsection{Limit theorem}\label{Subsec:thmlimite} In this subsection we fix an integer $d\geqslant 1$ and a 
sub-semigroup $\Gamma$ in $\mathbb N^{d+1}$. For any integer $n\in\mathbb N$ we denote by $\Gamma_n$ the set 
$\{\alpha\in\mathbb N^d\,|\,(n,\alpha)\in\Gamma\}$. We suppose that the semi-group $\Gamma$ verifies the following 
conditions (cf. \cite[\S2.1]{Lazarsfeld_Mustata08})~:
\begin{enumerate}[(a)]
\item $\Gamma_0=\{\boldsymbol{0}\}$,
\item there is a finite subset $B$ in $\{1\}\times\mathbb N^d$ such that $\Gamma$ is contained in the sub-monoid of 
$\mathbb N^{d+1}$ generated by $B$,
\item the group $\mathbb Z^{d+1}$ is generated by $\Gamma$.
\end{enumerate}
We let $\Sigma(\Gamma)$ be the (closed) convex cone in $\mathbb R^{d+1}$ generated by $\Gamma$. Under the above 
conditions the projection of $\Sigma\cap(\{1\}\times\mathbb R^d)$ into $\mathbb R^d$ is a convex body in $\mathbb R^d$,
denoted $\Delta(\Gamma)$. Moreover, we have that 
\[\lim_{n\rightarrow+\infty}\frac{\#\Gamma_n}{n^d}=\mathrm{vol}(\Delta(\Gamma)),\]
where $\mathrm{vol}(.)$ is Lesbesgue measure on $\mathbb R^d$  (cf. \cite[proposition 2.1]{Lazarsfeld_Mustata08}).

We say that a function $\Phi:\Gamma\rightarrow\mathbb R$ is \emph{superadditive} if 
$\Phi(\gamma+\gamma')\geqslant\Phi(\gamma)+\Phi(\gamma')$. In what follows, we study the asymptotic properties of
super-additive functions.
\begin{lemm}
Let $\Phi$ be a superadditive function defined on $\Gamma$ such that 
$\Phi(0,0)=0$.
\begin{enumerate}[(1)]
\item For any real number $t$ the set $\Gamma_{\Phi}^t:=\{(n,\alpha)\in\Gamma\,|\,\Phi(n,\alpha)\geqslant nt\}$ is a sub-semigroup of $\Gamma$.
\item If $t\in\mathbb R$ is a real number such that
\[t<\lim_{n\rightarrow+\infty}\sup_{\alpha\in\Gamma_n}\frac{1}{n}\Phi(n,\alpha),\]
then $\Gamma_{\Phi}^t$ satisfies conditions (a)--(c) above.
\end{enumerate}
\end{lemm}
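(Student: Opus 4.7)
For part (1), superadditivity does essentially all the work. The element $(0,\boldsymbol{0})$ lies in $\Gamma_\Phi^t$ since $\Phi(0,\boldsymbol{0})=0\geqslant 0\cdot t$; and if $(n,\alpha),(m,\beta)\in\Gamma_\Phi^t$ then $(n+m,\alpha+\beta)\in\Gamma$ because $\Gamma$ is a semigroup, while
\[\Phi(n+m,\alpha+\beta)\geqslant\Phi(n,\alpha)+\Phi(m,\beta)\geqslant nt+mt=(n+m)t,\]
so $(n+m,\alpha+\beta)\in\Gamma_\Phi^t$. No further structure of $\Phi$ is needed here.

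For part (2), conditions (a) and (b) fall out from the inclusion $\Gamma_\Phi^t\subset\Gamma$. Indeed $(\Gamma_\Phi^t)_0\subset\Gamma_0=\{\boldsymbol{0}\}$, and we have just noted that $(0,\boldsymbol{0})\in\Gamma_\Phi^t$, giving (a); and the same finite set $B\subset\{1\}\times\mathbb{N}^d$ that controls $\Gamma$ controls $\Gamma_\Phi^t$, giving (b). The substance lies in (c): one must show that $\Gamma_\Phi^t$ generates $\mathbb{Z}^{d+1}$ as a group. This is the step I expect to be the main obstacle.

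The strategy for (c) is an absorption argument built on a single strictly good element. The hypothesis $t<\lim_{n\to+\infty}\sup_{\alpha\in\Gamma_n}\Phi(n,\alpha)/n$ provides some $(n_0,\alpha_0)\in\Gamma$ with $\Phi(n_0,\alpha_0)>n_0 t$; set $\varepsilon:=\Phi(n_0,\alpha_0)-n_0 t>0$. For an arbitrary $(m,\beta)\in\Gamma$ and any integer $N\geqslant 1$, superadditivity gives
\[\Phi(m+Nn_0,\beta+N\alpha_0)\geqslant\Phi(m,\beta)+N\Phi(n_0,\alpha_0)=\Phi(m,\beta)+Nn_0 t+N\varepsilon,\]
so for $N$ large enough (depending on $m,\beta$) we obtain $\Phi(m+Nn_0,\beta+N\alpha_0)\geqslant(m+Nn_0)t$, i.e. $(m+Nn_0,\beta+N\alpha_0)\in\Gamma_\Phi^t$. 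Since part~(1) shows $(Nn_0,N\alpha_0)\in\Gamma_\Phi^t$ as well, the difference
\[(m,\beta)=(m+Nn_0,\beta+N\alpha_0)-(Nn_0,N\alpha_0)\]
belongs to the subgroup of $\mathbb{Z}^{d+1}$ generated by $\Gamma_\Phi^t$. Because $(m,\beta)\in\Gamma$ was arbitrary and $\Gamma$ already generates $\mathbb{Z}^{d+1}$ by the hypothesis on $\Gamma$, this subgroup is all of $\mathbb{Z}^{d+1}$, establishing (c). Beyond isolating the right strictly good $(n_0,\alpha_0)$ and running the absorption, the proof is bookkeeping.
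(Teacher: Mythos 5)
Your proof is correct and follows essentially the same route as the paper: superadditivity for (1), and for (c) the same absorption argument that adds a large multiple of a strictly good element to force membership in $\Gamma_\Phi^t$, then differences to recover arbitrary elements. The only cosmetic difference is that the paper absorbs a finite generating set $A\subset\Gamma$ while you absorb all of $\Gamma$ directly; both are valid and of equal difficulty.
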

\begin{proof}
(1) As $\Phi$ is superadditive, for any $(n,\alpha)$ and $(m,\beta)$ in $\Gamma_{\Phi}^t$ we have that 
\[\Phi(n+m,\alpha+\beta)\geqslant\Phi(n,\alpha)+\Phi(m,\beta)\geqslant nt+mt=(n+m)t,\]
and hence $(n+m,\alpha+\beta)\in\Gamma_{\Phi}^t$.

(2) It is easy to check that (a) and (b) are satisfied by $\Gamma_{\Phi}^t$. We now prove (c). Let $A$ be a finite 
subset of $\Gamma$ generating $\mathbb Z^{d+1}$ as a group. By hypothesis, there exists a $\varepsilon>0$ and a 
$\gamma=(m,\beta)\in\Gamma$ such that $\Phi(m,\beta)\geqslant(t+\varepsilon)m$. It follows that for any 
$(n,\alpha)\in\Gamma$, we have that
\[\frac{\Phi(km+n,k\beta+\alpha)}{km+n}\geqslant\frac{\Phi(n,\alpha)+k\Phi(m,\beta)}{km+n}\geqslant\frac{\Phi(n,\alpha)+km(t+\varepsilon)}{km+n}\geqslant t\]
for large enough $k$. There therefore exists a $k_0\geqslant 1$ such that $k\gamma+\xi\in\Gamma_{\Phi}^t$ for any 
$\xi\in A$ and $k\geqslant k_0$, so $\Gamma_{\Phi}^t$ generates $\mathbb Z^{d+1}$ as a group.
\end{proof}
\begin{rema}\label{Rem:foncrepartition}
The superadditivity of $\Phi$ implies that
\[\Delta(\Gamma_\Phi^{\varepsilon t_1+(1-\varepsilon)t_2})\supset \varepsilon\Delta(\Gamma_\Phi^{t_1})+(1-\varepsilon)\Delta(\Gamma_\Phi^{t_2}).\] By the Brunn-Minkowski theorem, the function $t\mapsto\mathrm{vol}(\Delta(\Gamma_\Phi^t))^{1/d}$ is
concave on $]-\infty,\theta[\,$, where 
\[\theta=\lim_{n\rightarrow+\infty}\sup_{\alpha\in\Gamma_n}\frac{1}{n}\Phi(n,\alpha),\] so it is continuous on this interval. 
Moreover, as the set (dense in $\Delta(\Gamma)$) \[\{\alpha/n\,:\,(n,\alpha)\in\Gamma,\,n\geqslant 1\}\] 
is contained in $\bigcup_{t\in\mathbb R}\Delta(\Gamma_{\Phi}^t)$, we get that 
\[\mathrm{vol}(\Delta(\Gamma))=\lim_{t\rightarrow-\infty}\mathrm{vol}(\Delta(\Gamma_\Phi^t)).\] 
\end{rema}
The following result is a limit theorem for superadditive functions defined on $\Gamma$. It is a natural generalisation
of \cite[Theorem 1.11]{Boucksom_Chen}
\begin{theo}\label{Thm:thmdelimite}
Let $\Phi:\Gamma\rightarrow\mathbb R$ be a superadditive function such that
\begin{equation}\label{Equ:seuil}\theta:=\lim_{n\rightarrow+\infty}\sup_{\alpha\in\Gamma_n}\frac{1}{n}\Phi(n,\alpha)<+\infty.\end{equation}
For any integer $n\geqslant 1$, we consider $Z_n=\Phi(n,.)$ as a uniformly distributed random variable on $\Gamma_n$. 
The sequence of random variables $\big(Z_n/n\big)_{n\geqslant 1}$ then converges in 
law\footnote{We say that a sequence of random
variables $(Z_n)_{n\geqslant 1}$ converges in law to a random variable $Z$ if the law of $Z_n$ converges weakly to that
of $Z$, i.e., for any continuous bounded function $h$ on $\mathbb R$ we have that 
$\displaystyle\lim_{n\rightarrow+\infty}\mathbb E[h(Z_n)]=\mathbb E[Z]$, or equivalently, the probability function of 
$Z_n$ converges to that of $Z$ at any point $x\in\mathbb R$ such that $\mathbb P(Z=x)=0$.} to a limit random variable 
$Z$ whose law is given by
\[\mathbb P(Z\geqslant t)=\frac{\mathrm{vol}(\Delta(\Gamma_{\Phi}^t))}{
\mathrm{vol}(\Delta(\Gamma))},\quad t\neq\theta.\]
\end{theo}
\begin{proof} By Remark \ref{Rem:foncrepartition} the function $F$ defined on $t\in\mathbb R\setminus\{\theta\}$ by 
$F(t):=\mathrm{vol}(\Delta(\Gamma_\Phi^t))/\mathrm{vol}(\Delta(\Gamma))$ is decreasing and continuous and
$\displaystyle\lim_{t\rightarrow-\infty}F(t)=1$. 
Moreover, condition \eqref{Equ:seuil} implies that $F(t)=0$ for large enough positive $t$ and it follows that
if we extend the domain of definition of $F$ to $\mathbb R$ by taking $F(\theta)$ to be the limit of $F(t)$ as $t$ 
tends to $\theta$ from the left we get a (left continuous) probability function on $\mathbb R$. For any integer 
$n\geqslant 1$ and any real number $t$ we have that 
\[\mathbb P(Z_n\geqslant t)=\frac{\#\Gamma_{\Phi,n}^t}{\#\Gamma_n},\]
where $\Gamma_{\Phi,n}^t$ is the set of all $\alpha\in\mathbb N^d$ such that $(n,\alpha)\in\Gamma_{\Phi}^t$.
By the previous lemma we have that
\begin{equation}\label{Equ:convergence}\lim_{n\rightarrow+\infty}\mathbb P(Z_n\geqslant t)=F(t)\end{equation}
for any $t<\theta$. Moreover, if $t>\theta$ then $\Gamma_{\Phi,n}^t$ is empty for any $n\geqslant 1$ and 
$\Delta(\Gamma_{\Phi}^t)$ is also empty, so equation \eqref{Equ:convergence} also holds for $t>\theta$. Finally, if
the function $F$ is continuous at $\theta$ then since both $t\mapsto\mathbb P(Z_n\geqslant t)$ and $F$ are decreasing
we also have that $\displaystyle\lim_{n\rightarrow+\infty}\mathbb P(Z_n\geqslant\theta)=F(\theta)$. The result follows.
\end{proof}
\begin{rema}\label{Rem:realisationdeZ}
The limit law in the above theorem can also be characterised as the pushforward of Lesbesgue measure on $\Delta(\Gamma)$ by
a function determined by $\Phi$. Let $G_{\Phi}:\Delta(\Gamma)\rightarrow\mathbb R\cup\{-\infty\}$ be the map sending $x$
to $\sup\{t\in\mathbb R\,:\,x\in\Delta(\Gamma_\Phi^t)\}$. This is a real concave function on\footnote{The set 
$\bigcup_{t\in\mathbb R}\Delta(\Gamma_\Phi^t)$ is convex and its volume is equal to $\mathrm{vol}(\Delta(\Gamma))$ so 
it contains $\Delta(\Gamma)^\circ$.} $\Delta(\Gamma)^\circ$. The function $G_{\Phi}$ is therefore continuous on 
$\Delta(\Gamma)^\circ$. By definition, the limit law is equal to the pushforward of normalised Lesbesgue measure on 
$\Delta(\Gamma)$ by $G_\Phi$. In particular, if $h$ is a continuous bounded function then we have that
\begin{equation}\label{Equ:formulelimit}\lim_{n\rightarrow+\infty}\frac{h(\Phi(n,\alpha)/n)}{\#\Gamma_n}=\frac{1}{\mathrm{vol}(\Delta(\Gamma))}\int_{\Delta(\Gamma)^\circ}G_{\Phi}(x)\,\mathrm{vol}(\mathrm{d}x)\end{equation}
This enables us to realise the random variable $Z$ as the function $G_\Phi$ defined on the convex body $\Delta(\Gamma)$
equipped with normalised Lesbesgue measure.
\end{rema}
In the rest of this section we apply these results to the situation described in \S\ref{SubSec:normesmonomiales}. We
consider an integral projective scheme $X$ of dimension $d\geqslant 1$ defined on a field $k$ and an invertible 
$\mathcal O_X$-module $L$. We also choose a regular rational point (it is assumed that such a point exists) $x\in X(k)$, a 
local system of parameters $(z_1,\ldots,z_d)$ and a monomial order on $\mathbb N^d$. Let $V_\sbullet$ be a graded 
linear system on $L$ whose Okounkov semi-group $\Gamma(V_\sbullet)$ satisfies\footnote{Note that these three conditions
are automatically satified whenever $V_\sbullet$ contains an ample divisor, ie. $V_n\neq\{0\}$ for large enough $n$ and
there is an integer $p\geqslant 1$, an ample $\mathcal O_X$-module $A$ and a non-zero section $s$ of $pL-A$, such that
\[\mathrm{Im}\big(H^0(X,nA)\stackrel{\cdot s^n}{\longrightarrow} H^0(X,npL)\big)\subset V_{np}\]
for any $n\in\mathbb N$, $n\geqslant 1$. We refer the reader to \cite[lemma 2.12]{Lazarsfeld_Mustata08} for a proof.
} conditions (a)-(c) of section 4.3. For any $n\in\mathbb N$ let $V_n$ be equipped with two
norms $\varphi_n$ and $\psi_n$ which are assumed to be ultrametric for non-archimedean $k$. 
\begin{theo}\label{Thm:energieequilibree} Assume the norms $\varphi_n$ and $\psi_n$  satisfy the following conditions:
\begin{enumerate}[(1)]
\item the system of norms $(\varphi_n,\psi_n)_{n\in\mathbb N}$ is submultiplicative (i.e. satisfies 
\eqref{Equ:sousmultiplicative});
\item we have that $d(\varphi_n,\psi_n)=O(n)$ as $n\rightarrow+\infty$;
\item there is a constant $C>0$ such that\footnote{See \S\ref{SubSec:normesmonomiales} for notation.} 
$\inf_{\alpha\in\Gamma(V_n)}\ln\|s_{(n,\alpha)}\|_{\hat\varphi_n}\geqslant -Cn$ for any $n\in\mathbb N$, $n\geqslant 1$. 
\end{enumerate}
Then the sequence $(\frac 1n\widehat{\mu}(V_n,\varphi_n,\psi_n))_{n\geqslant 1}$ converges in $\mathbb R$.
\end{theo}
\begin{proof}
We introduce auxillary monomial norms $\eta_n$ as in \S\ref{SubSec:normesmonomiales}. Let 
$\Phi:\Gamma(V_\sbullet)\rightarrow\mathbb R$ be the function that sends $(n,\alpha)\in\Gamma(V_\sbullet)$ to 
$\widehat{\deg}(\mathrm{gr}^\alpha(V_n),\varphi_n,\eta_n)$. This function is superadditive and condition (3) implies 
that
\[\lim_{n\rightarrow+\infty}\sup_{\alpha\in V_n}\frac 1n\Phi(n,\alpha)<+\infty.\]
Let  $Z_{\Phi,n}=\Phi(n,.)$ be a uniformly distributed random variable on $\Gamma(V_n)$. By Theorem 
\ref{Thm:thmdelimite} the sequence of random variables $(Z_{\Phi,n}/n)_{n\geqslant 1}$ converges in law to a random 
variable $Z_\Phi$ defined on $\Delta(\Gamma(V_\sbullet))$ (as in remark \ref{Rem:realisationdeZ}). Similarly, conditions
(2) and (3) prove that (3) also holds for the norms $\hat{\psi}_n$. Denote by $\Psi:\Gamma(V_\sbullet)\rightarrow
\mathbb R$ the function sending $(n,\alpha)\in\Gamma(V_\sbullet)$ to $\widehat{\deg}(\mathrm{gr}^\alpha(V_n),\psi_n,\eta_n)$ and by $Z_{\Psi,n}=\Psi(n,.)$ the random variable on $\Gamma(V_n)$ such that $n\in\mathbb N$, $n\geqslant 1$. The
sequence of random variables $(Z_{\Psi,n}/n)_{n\geqslant 1}$ then converges in law to a random variable $Z_\Psi$ defined on 
$\Delta(\Gamma(V_\sbullet))$. Moreover, (2) implies that the function $|Z_\Phi-Z_\Psi|$ is bounded on 
$\Delta(\Gamma(V_\sbullet))^\circ$.

By equation \eqref{Equ:remenerauquotient} and the equality
\[\widehat{\deg}(\mathrm{gr}^\alpha(V_n),\varphi_n,\psi_n)=\widehat{\deg}(\mathrm{gr}^{\alpha}(V_n),\varphi_n,\eta_n)-\widehat{\deg}(\mathrm{gr}^{\alpha}(V_n),\psi_n,\eta_n),\]
it will be enough to prove that the sequence  $(\mathbb E[Z_{\Phi,n}/n]-\mathbb 
E[Z_{\Psi,n}/n])_{n\geqslant 1}$ converges in $\mathbb R$. Condition (2) of the theorem implies that the functions 
$\frac 1n|Z_{\Phi,n}-Z_{\Psi,n}|$ ($n\in\mathbb N$) are uniformly bounded. Let $A>0$ be a constant such that
\[\forall\,n\geqslant 1,\quad |Z_{\Phi,n}-Z_{\Psi,n}|\leqslant An.\]
As $(Z_{\Phi,n}/n)_{n\geqslant 1}$ and $(Z_{\Psi,n}/n)_{n\geqslant 1}$ converge in law to $Z_\Phi$ and $Z_\Psi$ respectively, for
any $\varepsilon>0$ there is a  $T_0>0$ and a $n_0\in\mathbb N$ such that
\begin{equation*}\forall\,T\geqslant T_0,\;\forall\,n\geqslant n_0,\quad \mathbb P(Z_{\Phi,n}\leqslant -nT)<\varepsilon\text{ et }\mathbb P(Z_{\Psi,n}\leqslant -nT)<\varepsilon.\end{equation*}It follows that
\begin{equation}\label{Equ:convergenceT}\begin{split}&\quad\;\big|\mathbb E[Z_{\Phi,n}/n]-\mathbb E[Z_{\Psi,n}/n]-\mathbb E[\max(Z_{\Phi,n}/n,-T)]+\mathbb E[\max(Z_{\Psi,n}/n,-T)]\big|\\
&\leqslant 2\varepsilon\mathbb E[|Z_{\Phi,n}/n-Z_{\Psi,n}/n|]\leqslant 2\varepsilon A
\end{split}
\end{equation}
whenever $T\geqslant T_0$ and $n\geqslant n_0$. Moreover, as the random variables $Z_{\Phi,n}/n$ and $Z_{\Psi,n}/n$ are
uniformly bounded above and the sequences $(Z_{\Phi,n}/n)_{n\geqslant 1}$ and $(Z_{\Psi,n}/n)_{n\geqslant 1}$ converge in law
it follows that
\[\lim_{n\rightarrow+\infty}\mathbb E[\max(Z_{\Phi,n}/n,-T)]-\mathbb E[\max(Z_{\Psi,n}/n,-T)]=\mathbb E[\max(Z_\Phi,-T)-\max(Z_\Psi,-T)].\]
Moreover, as the function $|Z_\Phi-Z_\Psi|$ is bounded, the dominated convergence theorem implies that 
\[\lim_{T\rightarrow+\infty}\mathbb E[\max(Z_\Phi,-T)-\max(Z_\Psi,-T)]=\mathbb E[Z_\Phi-Z_{\Psi}].\]
Equation \eqref{Equ:convergenceT} then implies that
\[\limsup_{n\rightarrow+\infty}\big|\mathbb E[Z_{\Phi,n}/n]-\mathbb E[Z_{\Psi,n}/n]-\mathbb E[Z_\Phi-Z_\Psi]\big|\leqslant 2\varepsilon A.\]
As $\varepsilon$ is arbitrary, we get that
\[\lim_{n\rightarrow+\infty}\frac 1n\widehat{\mu}(V_n,\varphi_n,\psi_n)=\mathbb E[Z_\Phi-Z_\Psi].\]
\end{proof}

Condition (3) in the above theorem holds whenever $\varphi_n$ comes from a continuous metric on the invertible 
$\mathcal O_X$-module $L$. This can be proved by considering a monomial order 
$\leqslant$ on $\mathbb N^d$ such that\footnote{
When $\alpha_1+\cdots+\alpha_d=\beta_1+\cdots+\beta_d$ we may use the lexicographic order, for example.} 
$\alpha_1+\cdots+\alpha_d<\beta_1+\cdots+\beta_d$ implies $(\alpha_1,\ldots,\alpha_d)<(\beta_1,\ldots,\beta_d)$.  Let 
$X^{\mathrm{an}}$ be the analytic space associated to the $k$-scheme $X$ (in the Berkovich sense \cite{Berkovich90} if 
$k$ is non-archimedean) and let $L^{\mathrm{an}}$ be the pull-back of $L$ to $X^{\mathrm{an}}$. Let $\mathcal C^0_{X^{\mathrm{an}}}$ be the sheaf of continuous real functions on $X^{\mathrm{an}}$. A \emph{continuous metric} on $L$ is a morphism of set
sheaves, $\|.\|$, from $L^{\mathrm{an}}\otimes\mathcal C^0_{X^{\mathrm{an}}}$ to $\mathcal C^0_{X^{\mathrm{an}}}$ which in every
point $x\in X^{\mathrm{an}}$ induces a norm $\|.\|(x)$ on the fibre $L^{\mathrm{an}}(x)$. Given a continuous metric 
$\varphi$ on $X$ we can equip $H^0(X,L)$ with the supremum norm $\|.\|_{\varphi,\sup}$ such that
\[\forall\,s\in H^0(X,L),\quad \|s\|_{\varphi,\sup}:=\sup_{x\in X^{\mathrm{an}}}\|s\|_{\varphi}(x).\]
For any integer $n\in\mathbb N$ the metric $\varphi$ induces by passage to the tensor product a continuous metric 
$\varphi^{\otimes n}$ on $nL$. Let $\varphi_n$ be the supremum norm on $H^0(X,nL)$ induced by $\varphi^{\otimes n}$ (or its
restriction to $V_n$ by abuse of language): the system of norms $(\varphi_n)_{n\geqslant 0}$ then satisfies condition (3) 
of Theorem \ref{Thm:energieequilibree}. This follows from Schwarz's (complex or non-archimedean) Lemma (cf. 
\cite[pp.205-206]{Chambert}). This gives us the following corollary.

\begin{coro}\label{Cor:thmlimite}
Let $X$ be a projective integral scheme defined over a field $k$ and let $L$ be an invertible $\mathcal O_X$-module 
equipped with two continuous metrics $\varphi$ and $\psi$. Let $V_\sbullet$ be a graded linear system of $L$ such that 
$\Gamma(V_\sbullet)$ satisfies conditions (a)--(c) above. For any integer $n\in\mathbb N$ let $\varphi_n$ and $\psi_n$ 
be the supremum norms on $V_n$ associated to the metrics $\varphi^{\otimes n}$ and $\psi^{\otimes n}$ respectively. The
sequence $(\widehat{\mu}(V_n,\varphi_n,\psi_n)/n)_{n\geqslant 1}$ then converges in $\mathbb R$.
\end{coro}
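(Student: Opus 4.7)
The plan is to check that the hypotheses of Theorem \ref{Thm:energieequilibree} are satisfied by the system of supremum norms $(\varphi_n,\psi_n)_{n\geqslant 0}$ coming from the metrics $\varphi$ and $\psi$; once this is done, the convergence of $(\widehat{\mu}(V_n,\varphi_n,\psi_n)/n)_{n\geqslant 1}$ follows directly.

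Conditions (1) and (2) should be essentially formal. Submultiplicativity (1) comes from the fact that the metric induced on $(n+m)L$ by the tensor product satisfies $\|s\otimes t\|(x)=\|s\|(x)\cdot\|t\|(x)$ pointwise, so taking the supremum over $X^{\mathrm{an}}$ preserves the inequality. For condition (2), the function $\ln\|\cdot\|_\psi-\ln\|\cdot\|_\varphi$ descends to a continuous function on $X^{\mathrm{an}}$, which is compact, hence bounded by some constant $M\geqslant 0$; applying this pointwise estimate to sections of $nL$ yields $d(\varphi_n,\psi_n)\leqslant Mn$.

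The real work is verifying condition (3), i.e., a uniform lower bound $\ln\|s_{(n,\alpha)}\|_{\hat\varphi_n}\geqslant -Cn$. The plan is to choose a monomial order on $\mathbb N^d$ refining the total degree, so that any section belonging to $\mathcal F^\alpha V_n$ vanishes at $x$ to order at least $|\alpha|:=\alpha_1+\cdots+\alpha_d$ in the local parameters. By condition (b) on $\Gamma(V_\sbullet)$, there exists $M_0>0$ such that $|\alpha|\leqslant M_0 n$ for every $(n,\alpha)\in\Gamma(V_\sbullet)$. Fixing a trivialization of $L$ on a neighbourhood $U$ of $x$ and working with the associated holomorphic (resp.~affinoid) coordinates, any lift $s\in\mathcal F^\alpha V_n$ of $s_{(n,\alpha)}$ has, after normalizing by the chosen basis vector, leading Taylor coefficient equal to $1$. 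A Cauchy-type estimate, namely the complex or non-archimedean Schwarz lemma cited in the paragraph preceding the corollary (cf.~\cite[pp.~205--206]{Chambert}), then provides a radius $r>0$, depending only on $U$, the trivialization and $\varphi$, such that $\|s\|_{\varphi,\sup}\geqslant r^{|\alpha|}$ for every such lift. Passing to the infimum over lifts yields $\|s_{(n,\alpha)}\|_{\hat\varphi_n}\geqslant r^{|\alpha|}\geqslant r^{M_0 n}$, which gives condition (3) with $C=-M_0\ln r$.

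The main obstacle will be the Schwarz-lemma step: one has to coordinate the choice of trivialization, monomial order and local parameters so that the lower bound holds simultaneously for all $(n,\alpha)\in\Gamma(V_\sbullet)$ and depends only linearly on $n$. Continuity of $\varphi$ and compactness of a closed neighbourhood of $x$ are what make this uniformity available, both in the archimedean and the non-archimedean setting. Once the three conditions are established, a direct application of Theorem \ref{Thm:energieequilibree} gives the announced convergence of $\widehat{\mu}(V_n,\varphi_n,\psi_n)/n$ in $\mathbb R$.
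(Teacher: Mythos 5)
Your proposal is correct and follows the same route as the paper: verify conditions (1) and (2) of Theorem \ref{Thm:energieequilibree} by direct computation with supremum norms and compactness of $X^{\mathrm{an}}$, and verify condition (3) via a total-degree-refining monomial order together with the Schwarz lemma, exactly as the paper indicates in the paragraph preceding the corollary. The only difference is presentational: you unwind the Cauchy/Schwarz estimate and the linear bound $|\alpha|\leqslant M_0 n$ coming from condition (b), whereas the paper simply cites \cite[pp.~205--206]{Chambert} and treats this as established; your expansion is accurate and fills in the intended argument.
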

\begin{proof}
The system of norms $(\varphi_n)_{n\geqslant 0}$ is submultiplicative. If $s$ and $s'$ are elements of $V_n$ and $V_m$ 
respectively we have that 
\[\begin{split}&\quad\;\|s\otimes s'\|_{\varphi_{n+m}}=\sup_{x\in X^{\mathrm{an}}}\|s\otimes s'\|_{\varphi^{\otimes(n+m)}}(x)\\&\leqslant \Big(\sup_{x\in X^{\mathrm{an}}}\|s\|_{\varphi^{\otimes n}}(x)\Big)\cdot\Big(\sup_{x\in X^{\mathrm{an}}}\|s'\|_{\varphi^{\otimes m}}(x)\Big)=\|s\|_{\varphi_n}\cdot\|s'\|_{\varphi_m}.
\end{split}\]
Similarly, the system of norms $(\psi_n)_{n\geqslant 0}$ is also submultiplicative.
Moreover, as the topological space $X^{\mathrm{an}}$ is compact, we have that
\[\sup_{x\in X^{\mathrm{an}}}d(\|.\|_{\varphi}(x),\|.\|_{\psi}(x))<+\infty.\] and it follows that 
$d(\varphi_n,\psi_n)=O(n)$ as
$n\rightarrow+\infty$. Finally, as the norms $\varphi_n$ satisfy condition (3) of theorem \ref{Thm:energieequilibree}, 
the convergence of $(\widehat{\mu}(V_n,\varphi_n,\psi_n)/n)_{n\geqslant 1}$ as a consequence of this theorem.
\end{proof}
\begin{rema}
This result invites comparison with a result of Witt Nystr\"om's \cite[th\'eor\`eme 1.4]{Nystrom11}. Both methods use 
the monomial basis to construct super or subadditive functions on the Okounkov semi-group. However, the method in 
\cite{Nystrom11} is based on a comparison between the $L^2$ metric and the $L^\infty$ metric, whereas we use the 
Harder-Narasimhan formalism. This new approach is highly flexible and enables us to prove our result in the very 
general setting of a submultiplicatively normed linear system satisfying moderate conditions, in both the complex and
non-archimedean cases.
\end{rema}

\section{Asymptotic distributions of logarithmic sections}\label{Sec:demduthm}

In this section we prove our main theorem.

\subsection{A convergence criterium.} In this section we prove a convergence criterium. 
For any real number $x$ the expression  $x_+$ denotes $\max(x,0)$.

\begin{prop}\label{Pro:critereconve}
Let $(Z_n)_{n\geqslant 1}$ be a sequence of uniformly bounded random variables. Assume that for any $t\in\mathbb R$ the
sequence $(\mathbb E[\max(Z_n,t)])_{n\geqslant 1}$ converges in $\mathbb R$. The sequence of random variables 
$(Z_n)_{n\geqslant1}$ then converges in law.
\end{prop}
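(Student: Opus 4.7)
The plan is to exploit the fact that the function $t \mapsto G_n(t) := \mathbb{E}[\max(Z_n, t)]$ is convex in $t$ and that its derivative recovers the cumulative distribution function of $Z_n$. Since pointwise convergence of convex functions behaves well with respect to differentiation, this translates the hypothesised convergence of $G_n$ into the convergence of distribution functions required for convergence in law.

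First, I would observe that $\max(x,t) = t + (x-t)_+$ is convex in $t$ for each fixed $x$, so $G_n$ is convex on $\mathbb{R}$; consequently the pointwise limit $G := \lim_n G_n$ is also convex. A short computation via dominated convergence shows that the right derivative of $G_n$ satisfies $G_n'(t^+) = F_n(t) := \mathbb{P}(Z_n \leq t)$, while $G_n'(t^-) = F_n(t^-)$. Next, I would use the classical slope inequalities for convex functions: for any $h > 0$,
\[h F_n(t) \leq G_n(t+h) - G_n(t), \qquad G_n(t) - G_n(t-h) \leq h F_n(t^-).\]
Passing to $n \to \infty$ using $G_n \to G$ pointwise, then dividing by $h$ and letting $h \to 0^+$, yields
\[\limsup_{n \to \infty} F_n(t) \leq G'(t^+) =: F(t), \qquad \liminf_{n \to \infty} F_n(t^-) \geq G'(t^-).\]
At every point $t$ where $G$ is differentiable (equivalently, at every continuity point of $F$, since $F$ is the right derivative of the convex function $G$), these two bounds together with the trivial inequality $F_n(t^-) \leq F_n(t)$ force $F_n(t) \to F(t)$. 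The convexity of $G$ guarantees that its non-differentiability set is countable, so $F_n \to F$ holds outside a countable set.

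Finally, I would verify that $F$ is a genuine distribution function. The uniform boundedness $|Z_n| \leq M$ implies $G_n(t) = t$ for $t \geq M$ and $G_n(t) = \mathbb{E}[Z_n]$ for $t \leq -M$, so the limit $G$ satisfies $G(t) = t$ for $t \geq M$ and $G$ is constant for $t \leq -M$; hence $F(t) = 1$ for $t > M$ and $F(t) = 0$ for $t < -M$. Together with monotonicity and right-continuity inherited from the convexity of $G$, this shows that $F$ is the distribution function of some random variable $Z$ supported on $[-M, M]$. Pointwise convergence $F_n \to F$ at every continuity point of $F$ is the standard Portmanteau criterion for weak convergence of probability measures, which is precisely the convergence in law of $Z_n$ to $Z$.

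The main technical hurdle is the passage in the second paragraph from pointwise convergence of the $G_n$ to convergence of $F_n$ at continuity points of $F$; the identification $G_n'(t^+) = F_n(t)$ and the verification that $F$ is a cumulative distribution function are straightforward bookkeeping once the convex-function picture is in place.
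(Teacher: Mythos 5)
Your argument is correct, and it takes a genuinely different route from the paper's. The paper works dually: it picks a compactly supported smooth test function $h$, integrates by parts twice to write $\mathbb E[h(Z_n)]=\int h''(t)\,\mathbb E[(Z_n-t)_+]\,\mathrm dt$, passes to the limit by dominated convergence, and then invokes a Riesz-representation argument to recognize the limiting functional $I(h)$ as integration against a probability measure. You instead work directly with the cumulative distribution functions: you observe that $G_n(t)=\mathbb E[\max(Z_n,t)]$ is convex with right derivative $F_n$, use the slope inequalities for convex functions to deduce $F_n(t)\to G'(t^+)$ at every differentiability point of the limit $G$, note that the set of non-differentiability is countable, and verify from the uniform bound $|Z_n|\leqslant M$ that $F:=G'(\,\cdot\,^+)$ is a bona fide distribution function, at which point the standard Helly--Bray/Portmanteau criterion gives convergence in law. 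The two proofs exchange where the work is done: the paper's approach hides the identification of the limit inside the Riesz representation theorem (and requires a small argument that $I$ extends from smooth to continuous test functions and has total mass one), while yours is more elementary and explicit, producing the limit CDF by hand from the convexity structure. A side benefit of your version is that it exhibits the limit law concretely as the Lebesgue--Stieltjes measure of the right derivative of $\lim_n G_n$, which the paper's dual argument leaves implicit.
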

\begin{proof} Note that the condition of the proposition implies that, for any $t\in\mathbb R$, the sequence $(\mathbb E[(Z_n-t)_+])_{n\geqslant 1}$ converges in $\mathbb R$. In fact, one has \[\max(Z_n,t)=(Z_n-t)_++t.\]
Let $h$ be a compactly supported smooth function. We have that
\[\begin{split}\mathbb E[h(Z_n)]&=-\int_{\mathbb R}h(t)\,\mathrm{d}\mathbb P(Z_n\geqslant t)=\int_{\mathbb R}\mathbb P(Z_n\geqslant t)h'(t)\,\mathrm{d}t,
\end{split}\]
where the second equality comes from integration by parts. For any $a\in\mathbb R$ we have that 
\[\int_{a}^{+\infty}\mathbb P(Z_n\geqslant t)\,\mathrm{d}t=\mathbb E[(Z_n-a)_+]\]
and it follows that
\[\mathbb E[h(Z_n)]=-\int_{\mathbb R}h'(t)\,\mathrm{d}\mathbb E[(Z_n-t)_+]=
\int_{\mathbb R}h''(t)\mathbb E[(Z_n-t)_+]\,\mathrm{d}t.\]
 variables $Z_n$ are uniformly bounded the dominated convergence theorem implies that the sequence 
$(\mathbb E[h(Z_n)])_{n\geqslant 1}$ converges in $\mathbb R$. Let $I(h)$ be its limit. The functional $I(.)$ is 
continuous with respect to the supremum norm on the space of compactly supported smooth functions, so it can be 
extended by continuity to a positive linear form on the space of continuous compactly supported functions, and hence
defines a Radon measure on $\mathbb R$. As the random variable  $Z_n$ are uniformly bounded it follows that 
$I(.)$ is a probability measure on $\mathbb R$. The result follows.
 
\end{proof}

\subsection{Asymptotic distribution of eigenvalues} In what follows we fix a valued field $k$ which is either 
$\mathbb C$ with the usual absolute value or a complete non-archimedean field. Let $X$ be an integral projective 
scheme of dimension $d\geqslant 1$ defined over $k$ with a regular rational point. In this subsection we prove the 
following theorem (cf. \S\ref{Subsec:polygonenorme} and \S\ref{Sec:degrehn} for notations).
\begin{theo}\label{Thm:thmprincipal}
Let $L$ be an invertible $\mathcal O_X$-module and $V_\sbullet$ a graded linear subsystem of $L$ whose Okounkov 
semi-group satisfies conditions (a)--(c) of \S\ref{Subsec:thmlimite}. Let $\varphi$ and $\psi$ be two continuous 
metrics on $L$, and for any integer $n\geqslant 0$ let $\varphi_n$ and $\psi_n$ be the supremum norms on $V_n$ induced 
by the tensor product metrics $\varphi^{\otimes n}$ and $\psi^{\otimes n}$ respectively. Then we have that
\begin{enumerate}[(1)]
\item The sequence of random variables $(\frac 1nZ_{(V_n,\varphi_n,\psi_n)})_{n\geqslant 1}$ converges in law to a probability
measure on $\mathbb R$.
\item the sequence of polygons $(P_{(V_n,\varphi_n,\psi_n)})_{n\geqslant 1}$ converges uniformly to a concave function on 
$[0,1]$;
\end{enumerate}
\end{theo}
\begin{proof}
For any $n\in\mathbb N$, $n\geqslant 1$ we let $Z_n$ denote the random variable $\frac 1n Z_{(V_n,\varphi_n,\psi_n)}$. We
have that 
\[|Z_n|\leqslant \sup_{x\in X^{\mathrm{an}}}d(\|.\|_\varphi(x),\|.\|_{\psi}(x))\]
for any $n\geqslant 1$. The sequence of random variables $(Z_n)_{n\geqslant 1}$ is therefore uniformly bounded. By 
\cite[proposition 1.2.9]{Chen10b}, the second statement follows from the first.

We now prove the first statement by using the convergence criterion given in \ref{Pro:critereconve} and the
limit result proved in \ref{Cor:thmlimite}. For any real parameter $a$ let $\varphi(a)$ be the continuous metric on
$L$ such that
\[\forall\,x\in X^{\mathrm{an}},\quad \|.\|_{\varphi(a)}(x)=\mathrm{e}^a\|.\|_{\varphi}(x).\]
Let $\psi\vee\varphi(a)$ be the metric on $L$ such that
\[\forall\,x\in X^{\mathrm{an}},\quad \|.\|_{\psi\vee\varphi(a)}(x)=
\max\big(\|.\|_{\psi}(x),\|.\|_{\varphi(a)}(x)\big).\]
The supremum norm on $V_n$ associated to the metric $(\psi\vee\varphi(a))^{\otimes n}$ is 
$\psi_n\vee\varphi_n(an)$ (with the notations as in \S\ref{SubSec:Troncature} or 
\S\ref{Subsec:troncaturepadique}). Corollary \ref{Cor:thmlimite} applied to $\varphi$ and $\psi\vee\varphi(a)$ proves that the sequence $(\widehat{\mu}(V_n,\varphi_n,\psi_n\vee\varphi_n(na))/n)_{n\geqslant 1}$ converges in $\mathbb R$. 
Moreover, by propositions \ref{Pro:troncatureC} and \ref{Pro:troncaturepadique} we have that 
\[\Big|\frac 1n\widehat{\mu}(V_n,\varphi_n,\psi_n\vee\varphi_n(na))-\mathbb E[\max(Z_n,a)]\Big|\leqslant \frac 1nA(r_n),\]
where $r_n=\rang(V_n)$ and 
\[\forall\,r\in\mathbb N,\; r\geqslant 1,\quad
A(r):=2\ln(r)+\frac 12\ln(2).\]
As $r_n=O(n^d)$ when $n\rightarrow+\infty$, we have that $\lim_{n\rightarrow+\infty}A(r_n)/n=0$. It follows that the 
sequence $(\mathbb E[\max(Z_n,a)])_{n\geqslant 1}$ converges. By Proposition \ref{Pro:critereconve}, the result follows.
\end{proof}

\begin{rema}\label{Rem:modification de metrique}
The above result still holds whenever we replace $\varphi_n$ and $\psi_n$ by norms
$\varphi_n'$ and $\psi_n'$ such that
\[\max(d(\varphi_n,\varphi_n'),d(\psi_n,\psi_n'))=o(n),\quad n\rightarrow+\infty,\]
and the limit laws are the same.
If we let $Z_n'$ be the random variable $\frac 1n Z_{( V_n,\varphi_n',\psi_n')}$ then for any $a\in\mathbb R$ we have that
\[\begin{split}&\quad\;\big|\mathbb E[\max(Z_n,a)]-\mathbb E[\max(Z_n',a)]\big|\\&
\leqslant
\frac 1n\big|\widehat{\mu}(V_n,\varphi_n,\psi_n\vee\varphi_n(an))-\widehat{\mu}(V_n,\varphi_n',\psi_n'\vee\varphi_n(an))\big|+\frac 1nA(r_n)\\
&\leqslant \frac 1n( d(\varphi_n,\varphi_n')+d(\psi_n,\psi_n')+A(r_n)).
\end{split}\]
Letting $n\rightarrow+\infty$ we get that
\[\lim_{n\rightarrow+\infty}\mathbb E[\max(Z_n',a)]=\lim_{n\rightarrow+\infty}\mathbb E[\max(Z_n,a)].\]
In particular, this enables us to apply the theorem to  $L^2$ norms when $k$ is the field of complex numbers - see for example Lemma 3.2 of \cite{Ber_Bou08}.
\end{rema}
\subsection*{Remerciements} We are grateful to S\'ebastien Boucksom for calling our attention to 
Berndtsson's article \cite{Ber09}. The second named author was supported during the writing of this work by the ANR CLASS.

\backmatter
\bibliography{chen}
\bibliographystyle{smfplain}

\end{document}